\newtheorem{theorem}{Theorem}[section]
\newtheorem{corollary}[theorem]{Corollary}
\theoremstyle{definition} 
\newtheorem{example}{Example}[section]
\newcommand\on{\operatorname}
\newcommand\R{\on{Ric}}
\newcommand\sign{\on{sign}}
\begin{document}

\begin{frontmatter}



\title{Flat $3$-manifolds with diagonal metrics and applications to warped products} 


\author{Adara M. Blaga (corresponding author)} 

\affiliation{organization={Department of Mathematics, West University of Timisoara},
            addressline={V. P\^{a}rvan 4},
            city={Timi\c{s}oara},
            postcode={300223},
            country={Romania}}
            
\author{Dan Radu La\c tcu} 

\affiliation{organization={Central University Library of Timisoara},
            addressline={V. P\^{a}rvan 4A},
            city={Timi\c{s}oara},
            postcode={300223},
            country={Romania}}

\begin{abstract}
We provide necessary and sufficient conditions for a $3$-dimensional submanifold of $\mathbb R^3$ endowed with a diagonal metric to be flat. As applications, we characterize the flat manifolds of warped product-type, more precisely, the warped, biwarped, sequential warped, and doubly warped product manifolds, and we state the corresponding nonexistence results.
\end{abstract}



\begin{keyword}
flat Riemannian manifold \sep diagonal metric \sep warped product manifold

\MSC[2020] 35R01 \sep 53B20 \sep 53B25 \sep 53C21 \sep 58J60

\end{keyword}

\end{frontmatter}



\section{Preliminaries}

Let $(M,g)$ be an $n$-dimensional Riemannian manifold, and let
\begin{align*}
R(X,Y)Z&:=\nabla_X\nabla_YZ-\nabla_Y\nabla_XZ-\nabla_{[X,Y]}Z,\\
\R(Y,Z)&:=\sum_{k=1}^ng(R(E_k,Y)Z,E_k)
\end{align*}
be the Riemannian and the Ricci curvature tensor fields of the metric $g$, where $\nabla$ is the Levi-Civita connection of $g$, and $\{E_1,\dots,E_n\}$ is a local orthonormal frame on $(M, g)$.
We recall that a Riemannian manifold $(M, g)$ is called \textit{flat} if $R=0$, and it is called \textit{Ricci-flat} if $\R=0$.

Recently, in \cite{bl22}, we have described some $3$-dimensional almost $\eta$-Ricci solitons with diagonal metrics, providing also conditions for the manifold to be flat. The aim of the present paper is to characterize the flat $3$-dimensional Riemannian submanifolds of $\mathbb R^3$ with a diagonal metric, with a special view towards warped products.
As applications, we characterize the flat manifolds of warped product-type, more exactly, the warped \cite{bishop}, biwarped \cite{hakan} (a particular case of multiply warped \cite{nolker}), sequential warped \cite{ude}, and doubly warped \cite{erlich} product manifolds, and we prove some nonexistence results.
It is worth to be mentioned the importance and the applicability in Physics, especially in the Theory of Relativity, of (semi-)Riemannian manifolds of warped product-type. For example, the standard spacetime models such as Robertson--Walker, Schwarzschild, static and Kruskal, are all warped product manifolds. Moreover, the simplest models of neighborhoods of stars and black holes are also warped products (for details, see, for instance, \cite{neil}).

We shall briefly recall their definitions.
Let $(M_i,g_i)$, $i\in \{1,2\}$, be two Riemannian manifolds, let $M:=M_1\times M_2$, $\pi:M\rightarrow M_1$ be the canonical projection, and let
$f:\nolinebreak M_1\rightarrow\nolinebreak \mathbb R \setminus \{0\}$ be a smooth function. In 1969, Bishop and O'Neill introduced the notion of warped product manifold. More precisely, $(M,g)=:M_1\times _f M_2$ is called a \textit{warped product manifold} \cite{bishop} if the Riemannian metric $g$ is given by
$$g=\pi_{1}^{*}(g_1)+(\pi_1^*(f_1))^2 \pi_{2}^{*}(g_2).$$

This notion has been further extended to a larger number of manifolds.

If $f_1:M_1\rightarrow \mathbb R \setminus \{0\}$ and $f_2:M_2\rightarrow \mathbb R \setminus \{0\}$ are two smooth functions, Ehrlich called the manifold $(M,g)=:_{f_2}M_1\times_{f_1}  M_2$ a \textit{doubly warped product manifold} \cite{erlich} if the Riemannian metric $g$ is given by
$$g=(\pi_2^*(f_2))^2 \pi_{1}^{*}(g_1)+(\pi_1^*(f_1))^2 \pi_{2}^{*}(g_2).$$

In the case of three manifolds, the definitions of the manifolds of warped product-type mentioned above are the following.
Let $(M_i,g_i)$, $i\in \{1,2,3\}$, be three Riemannian manifolds, $M:=\nolinebreak M_1\times M_2\times M_3$, $\pi_i:M\rightarrow M_i$, $i\in \{1,2,3\}$, be the canonical projections. Then,

(1) $(M,g)=:M_1\times_{f_1}  M_2 \times_{f_2} M_3$ is called a \textit{biwarped product manifold} \cite{hakan} if
$$g=\pi_{1}^{*}(g_1)+(\pi_1^*(f_1))^2 \pi_{2}^{*}(g_2)+(\pi_1^*(f_2))^2 \pi_{3}^{*}(g_3),$$
where $f_1,f_2:M_1\rightarrow \mathbb R \setminus \{0\}$;

(2) $(M,g)=:(M_1\times_{f_1}  M_2) \times_{f_2} M_3$ is called a \textit{sequential warped product manifold} \cite{ude} if
$$g=\pi_{1}^{*}(g_1)+(\pi_1^*(f_1))^2 \pi_{2}^{*}(g_2)+(\pi^*(f_2))^2 \pi_{3}^{*}(g_3),$$
where $f_1:M_1\rightarrow \mathbb R \setminus \{0\}$ and $f_2:M_1\times M_2\rightarrow \mathbb R \setminus \{0\}$.

We shall further use the same notation for a function and its pull-back as well as for a metric and its pull-back on the product manifold. Also, we shall say that the above manifolds are \textit{proper} when all the functions are nonconstant.

\bigskip

Let $I_i\subseteq \mathbb R$, $i\in \{1,2,3\}$, be three open intervals and let $I=I_1\times I_2\times I_3$. We consider ${g}$ a Riemannian metric on $I$ given by
\begin{equation} \label{f6}
{g}=\frac{1}{f_1^2}(dx^1)^2+\frac{1}{f_2^2}(dx^2)^2+\frac{1}{f_3^2}(dx^3)^2,
\end{equation}
where $f_1$, $f_2$, and $f_3$ are smooth functions nowhere zero on $I$, and $x^1,x^2,x^3$ stand for the standard coordinates in $\mathbb R^3$. Let
$$\Big\{E_1:=f_1\frac{\partial}{\partial x^1}, \ \ E_2:=f_2\frac{\partial}{\partial x^2}, \ \ E_3:=f_3\frac{\partial}{\partial x^3}\Big\}$$
be a local orthonormal frame. We will denote as follows:
$$\frac{f_2}{f_1}\cdot\frac{\partial f_1}{\partial x^2}=:a_{12}, \ \ \frac{f_3}{f_1}\cdot\frac{\partial f_1}{\partial x^3}=:a_{13}, \ \ \frac{f_1}{f_2}\cdot\frac{\partial f_2}{\partial x^1}=:a_{21},$$$$\frac{f_3}{f_2}\cdot\frac{\partial f_2}{\partial x^3}=:a_{23}, \ \ \frac{f_1}{f_3}\cdot\frac{\partial f_3}{\partial x^1}=:a_{31}, \ \ \frac{f_2}{f_3}\cdot\frac{\partial f_3}{\partial x^2}=:a_{32}.$$
Computing the Lie brackets, defined as $[X,Y](h):=X(Y(h))-Y(X(h))$ for any vector fields $X$, $Y$ and any smooth function $h$ on $I$, we find:
$$[E_1,E_2]=-a_{12}E_1+a_{21}E_2=-[E_2,E_1],$$
$$[E_2,E_3]=-a_{23}E_2+a_{32}E_3=-[E_3,E_2],$$
$$[E_3,E_1]=-a_{31}E_3+a_{13}E_1=-[E_1,E_3].$$
On the base vector fields, the Levi-Civita connection $\nabla$ of $g$, obtained from
the Koszul's formula,
\begin{align*}
2g(\nabla_XY,Z)&=X(g(Y,Z))+Y(g(Z,X))-Z(g(X,Y))\\
&\hspace{12pt}-g(X,[Y,Z])+g(Y,[Z,X])+g(Z,[X,Y]),
\end{align*}
is given by:
$$\nabla_{E_1}E_1=a_{12}E_2+a_{13}E_3, \ \ \nabla_{E_2}E_2=a_{21}E_1+a_{23}E_3, \ \ \nabla_{E_3}E_3=a_{31}E_1+a_{32}E_2,$$
$$\nabla_{E_1}E_2=-a_{12}E_1, \ \ \nabla_{E_2}E_3=-a_{23}E_2, \ \ \nabla_{E_3}E_1=-a_{31}E_3,$$
$$\nabla_{E_1}E_3=-a_{13}E_1, \ \ \nabla_{E_3}E_2=-a_{32}E_3, \ \ \nabla_{E_2}E_1=-a_{21}E_2.$$

In the rest of the paper, whenever a function $f$ on $I\subseteq \mathbb R^3$ depends only on some of its variables, we will write in its argument only that variables in order to emphasize this fact, for example, $f(x^i)$, $f(x^i,x^j)$. Also, whenever a coefficient function $f_i$ of the metric $g$, $i\in \{1,2,3\}$, depends only on one of its variables, we will denote by $h_i:=\frac{\displaystyle f_i'}{\displaystyle f_i}$.

\section{The flatness condition}

The Riemannian and the Ricci curvature tensor fields of $(I,g)$, where $g$ is given by (\ref{f6}), are:
\begin{align*}
R(E_1,E_2)E_2&=[E_1(a_{21})+E_2(a_{12})-a_{21}^2-a_{12}^2-a_{13}a_{23}]E_1\\
&\hspace{12pt}+[E_1(a_{23})+a_{21}(a_{13}-a_{23})]E_3,\\
R(E_2,E_1)E_1&=[E_1(a_{21})+E_2(a_{12})-a_{21}^2-a_{12}^2-a_{13}a_{23}]E_2\\
&\hspace{12pt}+[E_2(a_{13})+a_{12}(a_{23}-a_{13})]E_3,\\
R(E_1,E_3)E_3&=[E_1(a_{31})+E_3(a_{13})-a_{31}^2-a_{13}^2-a_{12}a_{32}]E_1\\
&\hspace{12pt}+[E_1(a_{32})+a_{31}(a_{12}-a_{32})]E_2,\\
R(E_2,E_3)E_3&=[E_3(a_{31})+a_{32}(a_{21}-a_{31})]E_1\\
&\hspace{12pt}+[E_2(a_{32})+E_3(a_{23})-a_{32}^2-a_{23}^2-a_{21}a_{31}]E_2,\\
R(E_3,E_1)E_1&=[E_3(a_{12})+a_{13}(a_{32}-a_{12})]E_2\\
&\hspace{12pt}+[E_1(a_{31})+E_3(a_{13})-a_{31}^2-a_{13}^2-a_{12}a_{32}]E_3,\\
R(E_3,E_2)E_2&=[E_3(a_{21})+a_{23}(a_{31}-a_{21})]E_1\\
&\hspace{12pt}+[E_2(a_{32})+E_3(a_{23})-a_{32}^2-a_{23}^2-a_{21}a_{31}]E_3,\\
R(E_1,E_2)E_3&=[E_2(a_{13})+a_{12}(a_{23}-a_{13})]E_1\\
&\hspace{12pt}-[E_1(a_{23})+a_{21}(a_{13}-a_{23})]E_2,\\
R(E_2,E_3)E_1&=[E_3(a_{21})+a_{23}(a_{31}-a_{21})]E_2\\
&\hspace{12pt}-[E_3(a_{31})+a_{32}(a_{21}-a_{31})]E_3,\\
R(E_3,E_1)E_2&=-[E_3(a_{12})+a_{13}(a_{32}-a_{12})]E_1\\
&\hspace{12pt}+[E_1(a_{32})+a_{31}(a_{12}-a_{32})]E_3,
\end{align*}
\begin{align*}
\R(E_1,E_1)&=E_1(a_{21})+E_1(a_{31})+E_2(a_{12})+E_3(a_{13})\\
&\hspace{12pt}-a_{21}^2-a_{12}^2-a_{31}^2-a_{13}^2-a_{12}a_{32}-a_{13}a_{23},\\
\R(E_2,E_2)&=E_1(a_{21})+E_2(a_{12})+E_2(a_{32})+E_3(a_{23})\\
&\hspace{12pt}-a_{21}^2-a_{12}^2-a_{32}^2-a_{23}^2-a_{13}a_{23}-a_{21}a_{31},\\
\R(E_3,E_3)&=E_1(a_{31})+E_2(a_{32})+E_3(a_{13})+E_3(a_{23})\\
&\hspace{12pt}-a_{31}^2-a_{13}^2-a_{32}^2-a_{23}^2-a_{12}a_{32}-a_{21}a_{31},\\
\R(E_1,E_2)&=E_1(a_{32})+a_{31}(a_{12}-a_{32}),\\
\R(E_1,E_3)&=E_1(a_{23})+a_{21}(a_{13}-a_{23}),\\
\R(E_2,E_3)&=E_2(a_{13})+a_{12}(a_{23}-a_{13}).
\end{align*}

We aim to determine conditions that the three functions, $f_1$, $f_2$, and $f_3$, must satisfy for the Riemannian manifold $(I,g)$ to be flat. Let us firstly remark that,
if $f_i:I\rightarrow \mathbb R\setminus \{0\}$, $f_i=f_i(x^i)$ for $i\in \{1,2,3\}$ (in particular, if they are constant), then $(I,g)$ is a flat Riemannian manifold.

\bigskip

Since, in dimension $3$, every Ricci-flat manifold is also flat, we shall make use of the vanishing of the Ricci tensor in proving the following results.

\begin{theorem}\label{ps1b}
If $f_i=f_i(x^1)$ for $i\in \{1,2,3\}$, then $(I,g)$ is a flat Riemannian manifold if and only if one of the following assertions holds:

(1) $f_2=k_2\in \mathbb R\setminus \{0\}$ and $f_3=k_3\in \mathbb R\setminus \{0\}$;

(2) $f_2=k_2\in \mathbb R\setminus \{0\}$ and $f_3=\frac{\displaystyle k_3}{\displaystyle F-c_3}$, where $F$ is an antiderivative of $\frac{\displaystyle 1}{\displaystyle f_1}$, $k_3\in \mathbb R \setminus \{0\}$, and $c_3\in \mathbb R\setminus \{F(x^1)|\ x^1\in I_1\}$;

(3) $f_2=\frac{\displaystyle k_2}{\displaystyle F-c_2}$ and $f_3=k_3\in \mathbb R\setminus \{0\}$, where $F$ is an antiderivative of $\frac{\displaystyle 1}{\displaystyle f_1}$, $k_2\in \mathbb R \setminus \{0\}$, and $c_2\in \mathbb R\setminus \{F(x^1)|\ x^1\in I_1\}$.
\end{theorem}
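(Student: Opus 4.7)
The plan is to exploit the hypothesis $f_i=f_i(x^1)$ to collapse the Ricci system to a small ODE in $x^1$ alone. Substituting into the definitions of the $a_{ij}$, the vanishing of $\partial f_i/\partial x^2$ and $\partial f_i/\partial x^3$ forces $a_{12}=a_{13}=a_{23}=a_{32}=0$, so only $a_{21}$ and $a_{31}$ can be nonzero, and both depend on $x^1$ alone. Plugging this into the Ricci formulas from the excerpt, the three off-diagonal components vanish identically and the diagonal ones reduce to
\begin{align*}
\R(E_1,E_1)&=f_1(a_{21}+a_{31})'-a_{21}^2-a_{31}^2,\\
\R(E_2,E_2)&=f_1a_{21}'-a_{21}^2-a_{21}a_{31},\\
\R(E_3,E_3)&=f_1a_{31}'-a_{31}^2-a_{21}a_{31},
\end{align*}
where the prime denotes $d/dx^1$. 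Since in dimension three flat and Ricci-flat coincide (as the excerpt notes), flatness is equivalent to the simultaneous vanishing of these three scalars.

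Adding the second and third equations and subtracting the first immediately yields $a_{21}a_{31}\equiv 0$ on $I_1$. The crucial next step is to upgrade this pointwise relation to the statement that one of $a_{21}, a_{31}$ vanishes identically on all of $I_1$. Assuming $a_{21}(x_0)\neq 0$ somewhere, let $J\subseteq I_1$ be the maximal open neighborhood of $x_0$ on which $a_{21}\neq 0$; on $J$ we have $a_{31}\equiv 0$, and the remaining equation becomes $f_1 a_{21}' = a_{21}^2$, which integrates to $a_{21}=1/(c-F)$ with $F$ an antiderivative of $1/f_1$ and $c$ constant. Now $a_{21}=f_1 f_2'/f_2$ is smooth on the whole of $I_1$, whereas $1/(c-F(x))$ either stays bounded away from zero or blows up as $x$ approaches any point of $I_1$; hence $a_{21}$ cannot tend to $0$ at a boundary point of $J$ lying in $I_1$, so by maximality $J=I_1$ and $a_{31}\equiv 0$ globally. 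The mirror argument handles the case $a_{31}\not\equiv 0$.

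The three branches of this dichotomy correspond exactly to the three cases in the theorem. Both $a_{21}\equiv a_{31}\equiv 0$ forces $f_2, f_3$ constant, i.e.\ assertion (1). The case $a_{21}\equiv 0$ and $a_{31}\not\equiv 0$, using $a_{31}=f_1 f_3'/f_3$, integrates to $f_3=k_3/(F-c_3)$, the admissibility condition $c_3\notin F(I_1)$ being needed to keep $f_3$ smooth and nowhere zero on $I_1$, which is assertion (2); the symmetric case yields (3). Each converse is a direct substitution back into the three Ricci expressions above. The principal obstacle I anticipate is the connectivity step in the second paragraph: a priori $a_{21}a_{31}\equiv 0$ permits the two factors to vanish on different parts of $I_1$, and ruling this out cleanly requires combining the explicit ODE solution with the smoothness and pointwise continuity of $a_{21}$ at the boundary of its nonvanishing locus.
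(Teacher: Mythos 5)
Your proposal is correct and takes essentially the same route as the paper: the paper likewise reduces Ricci-flatness to the system $h_2h_3=0$, $h_i'=h_i(h_i-h_1)$ (where $h_i=f_i'/f_i$, so your $a_{i1}=f_1h_i$), integrates the Riccati-type equation explicitly on a maximal subinterval where the solution is nonvanishing, and uses continuity of the explicit solution at an interior boundary point of that subinterval to force the nonvanishing locus to be all of $I_1$, yielding exactly your three mutually exclusive branches. The only cosmetic difference is that you integrate $a_{21}'/a_{21}^2=1/f_1$ directly, whereas the paper first passes through $h_i=d_i f_i/f_1$ before arriving at the same formula $f_i=k_i/(F-c_i)$.
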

\begin{proof}
We have
$$a_{i1}=f_1\frac{\displaystyle f_i'}{\displaystyle f_i} \ \ \textrm{for} \ \ i\in\{2,3\}$$
and
$$a_{ij}=0 \ \ \textrm{for} \ \ (i,j)\in\{(1,2),(1,3),(2,3),(3,2)\},$$
and we get:
\begin{align*}
\R(E_1,E_1)&=E_1(a_{21})-a_{21}^2+E_1(a_{31})-a_{31}^2,\\
\R(E_2,E_2)&=E_1(a_{21})-a_{21}^2-a_{21}a_{31},\\
\R(E_3,E_3)&=E_1(a_{31})-a_{31}^2-a_{21}a_{31},\\
\R(E_1,E_2)&=\R(E_1,E_3)=\R(E_2,E_3)=0.
\end{align*}

Then, $\R=0$ if and only if
\begin{equation*}
\left\{
    \begin{aligned}
&      f_1\Big(f_1\frac{\displaystyle f_2'}{\displaystyle f_2}\Big)'-\Big(f_1\frac{\displaystyle f_2'}{\displaystyle f_2}\Big)^2+ f_1\Big(f_1\frac{\displaystyle f_3'}{\displaystyle f_3}\Big)'-\Big(f_1\frac{\displaystyle f_3'}{\displaystyle f_3}\Big)^2=0\\
&f_1\Big(f_1\frac{\displaystyle f_2'}{\displaystyle f_2}\Big)'-\Big(f_1\frac{\displaystyle f_2'}{\displaystyle f_2}\Big)^2-f_1^2\frac{\displaystyle f_2'}{\displaystyle f_2}\cdot\frac{\displaystyle f_3'}{\displaystyle f_3}=0\\
&f_1\Big(f_1\frac{\displaystyle f_3'}{\displaystyle f_3}\Big)'-\Big(f_1\frac{\displaystyle f_3'}{\displaystyle f_3}\Big)^2-f_1^2\frac{\displaystyle f_2'}{\displaystyle f_2}\cdot\frac{\displaystyle f_3'}{\displaystyle f_3}=0
    \end{aligned}
  \right. ,
\end{equation*}
and the previous system becomes
\begin{equation*}
\left\{
    \begin{aligned}
&      h_2'-h_2^2+h_1h_2+h_3'-h_3^2+h_1h_3=0\\
& h_2'-h_2^2+h_1h_2-h_2h_3=0\\
& h_3'-h_3^2+h_1h_3-h_2h_3=0
    \end{aligned}
  \right. ,
\end{equation*}
which is equivalent to
\begin{equation*}
\left\{
    \begin{aligned}
&      h_2h_3=0\\
& h_2'=h_2(h_2-h_1)\\
& h_3'=h_3(h_3-h_1)
    \end{aligned}
  \right. .
\end{equation*}
If $h_i\neq 0$, where $i\in \{2, 3\}$, let $J_1$ be a maximal open subinterval in $I_1$ such that $h_i(x^1)\neq 0$ everywhere on $J_1$. From $h_i'=h_i(h_i-h_1)$, we get $\frac{\displaystyle h_i'}{\displaystyle h_i}=h_i-h_1=\frac{\displaystyle f_i'}{\displaystyle f_i}-\frac{\displaystyle f_1'}{\displaystyle f_1}$ on $J_1$, and, by integration, we infer that $\frac{\displaystyle f_i'}{\displaystyle f_i}=h_i=d_i\frac{\displaystyle f_i}{\displaystyle f_1}$ on $J_1$, with $d_i\in \mathbb R \setminus \{0\}$, from which, $f_i=\frac{\displaystyle k_i}{\displaystyle F-c_i}$, and $h_i=\frac{\displaystyle -1}{\displaystyle f_1(F-c_i)}$ on $J_1$, where $F=F(x^1)$ is an antiderivative of $\frac{\displaystyle 1}{\displaystyle f_1}$, $k_i\neq 0$, and $c_i\in \mathbb R$ such that $F(x^1)-c_i\neq 0$ for any $x^1\in J_1$. Due to the maximality of $J_1$ and the continuity of $h_i$, we get $J_1=I_1$; hence, $h_i(x^1)\neq 0$ for any $x^1\in I_1$.

Therefore, we get either $h_i=0$ (i.e., $f_i$ constant) for $i\in \{2,3\}$, or  $h_i=0$ and $h_j'=h_j(h_j-h_1)$, $h_j\neq 0$ everywhere, for $(i,j)\in \{(2,3), (3,2)\}$, which leads to $f_j= \displaystyle\frac{k_j}{F-c_j}$, where $F=F(x^1)$ is an antiderivative of $\frac{\displaystyle 1}{\displaystyle f_1}$, $k_j\in \mathbb R \setminus \{0\}$, and $c_j\in \mathbb R$ such that $F(x^1)\neq c_j$ for any $x^1\in I_1$.
\end{proof}

\begin{corollary}\label{ps1a}
Under the hypotheses of Theorem \ref{ps1b}, if $h_1=c_0\in\mathbb R\setminus \{0\}$, then $(I,g)$ is a flat Riemannian manifold if and only if either both of the functions $f_2$ and $f_3$ are constant or
$$f_i=k_i\in \mathbb R\setminus \{0\} \ \ \textrm{and} \ \ f_j(x^1)= \frac{k_j}{e^{-c_0x^1}-c_j} \ \ \textrm{for} \ (i,j)\in \{(2,3), (3,2)\},$$
where $k_j\in \mathbb R\setminus \{0\}$, $c_j\in \mathbb R\setminus\{e^{-c_0x^1}|\ x^1\in I_1\}$. Moreover, $f_1(x^1)=c_1e^{c_0x^1}$, with  $c_1\in \mathbb R\setminus \{0\}$.

In particular, the statement is valid for $I_1=\mathbb R$ if we only add the condition $c_j\leq 0$.
\end{corollary}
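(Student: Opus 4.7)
The plan is to obtain this corollary as a direct specialization of Theorem \ref{ps1b} under the extra hypothesis $h_1 = f_1'/f_1 = c_0 \in \mathbb R\setminus\{0\}$. The first step is to integrate the ODE $f_1'/f_1 = c_0$, which yields $f_1(x^1) = c_1 e^{c_0 x^1}$ for some nonzero constant $c_1$. This also shows that $f_1$ is nowhere zero on $I_1$ automatically, so the hypothesis of Theorem \ref{ps1b} is compatible.

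Next, I would compute an explicit antiderivative of $1/f_1$, namely
$$F(x^1) = -\frac{1}{c_0 c_1}\, e^{-c_0 x^1},$$
since any constant of integration can be absorbed into the parameter $c_j$ appearing in the theorem. Plugging this $F$ into the formula $f_j = k_j/(F - c_j)$ from cases (2) and (3) of Theorem \ref{ps1b}, I obtain
$$f_j(x^1) = \frac{k_j}{-\tfrac{1}{c_0 c_1} e^{-c_0 x^1} - c_j} = \frac{-c_0 c_1 k_j}{e^{-c_0 x^1} + c_0 c_1 c_j}.$$
After renaming $-c_0 c_1 k_j \rightsquigarrow k_j$ and $-c_0 c_1 c_j \rightsquigarrow c_j$ (both still nonzero/real parameters), this is exactly the stated expression $f_j(x^1) = k_j/(e^{-c_0 x^1} - c_j)$. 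Case (1) of Theorem \ref{ps1b} gives the alternative that both $f_2$ and $f_3$ are constant, which is the first possibility in the corollary.

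For the admissibility condition on $c_j$, the requirement in Theorem \ref{ps1b} is $c_j \notin \{F(x^1) \mid x^1 \in I_1\}$, which under the substitution above translates, in the new parameters, into $c_j \notin \{e^{-c_0 x^1} \mid x^1 \in I_1\}$, matching the corollary. Finally, the \emph{in particular} clause for $I_1 = \mathbb R$ follows from the elementary observation that $\{e^{-c_0 x^1} \mid x^1 \in \mathbb R\} = (0, \infty)$ whenever $c_0 \neq 0$, so the admissibility condition reduces to $c_j \le 0$.

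I do not expect any real obstacle here: the corollary is a change-of-variables exercise built on top of Theorem \ref{ps1b}. The only point that needs a little care is the bookkeeping of multiplicative and additive constants when absorbing them into $k_j$ and $c_j$, and checking that the nonvanishing/admissibility conditions of Theorem \ref{ps1b} transform into exactly the conditions stated in the corollary.
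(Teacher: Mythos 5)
Your proof is correct, and it takes a genuinely shorter route than the paper. The paper does not specialize the closed-form conclusion of Theorem \ref{ps1b}; instead it goes back to the intermediate ODE from that theorem's proof, namely $h_j'=h_j(h_j-c_0)$ with $h_j\neq 0$, splits into the cases $h_j\equiv c_0$ (giving $f_j=c_je^{c_0x^1}$) and $h_j\neq c_0$ everywhere, solves the latter by partial fractions, $1=\frac{1}{c_0}\bigl[\frac{(h_j-c_0)'}{h_j-c_0}-\frac{h_j'}{h_j}\bigr]$, integrates twice (with a maximal-interval/continuity argument to extend from $J_1$ to $I_1$), and arrives at $f_j(x^1)=k_j/(e^{-c_0x^1}-c_j)$. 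You instead take the theorem's final formula $f_j=k_j/(F-c_j)$ at face value, compute the explicit antiderivative $F(x^1)=-\frac{1}{c_0c_1}e^{-c_0x^1}$ of $1/f_1$ for $f_1=c_1e^{c_0x^1}$, and absorb the nonzero factor $-c_0c_1$ into $k_j$ and $c_j$; since this reparametrization is bijective, the admissibility condition $c_j\notin F(I_1)$ transforms exactly into $c_j\notin\{e^{-c_0x^1}\mid x^1\in I_1\}$, and both directions of the equivalence are inherited from the theorem. Your approach buys two things: it avoids re-solving the Riccati-type ODE and the maximal-interval argument (already encapsulated in Theorem \ref{ps1b}), and it automatically subsumes the exponential solution $h_j=c_0$ as the admissible value $c_j=0$ (note $0\notin\{e^{-c_0x^1}\}$), a case the paper must treat separately before merging it into the final formula. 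The paper's route, by contrast, makes the structure of the solution set visible without needing an explicit antiderivative, which is why the same ODE manipulation recurs in its later theorems. Your constant bookkeeping checks out, as does the final observation that for $I_1=\mathbb R$ the excluded set is $(0,\infty)$, reducing the condition to $c_j\le 0$.
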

\begin{proof}
From Theorem \ref{ps1b}, we deduce that either both of the functions $f_2$ and $f_3$ are constant or
$$f_i=k_i\in \mathbb R\setminus \{0\} \ \ \textrm{and} \ \ h_j'=h_j(h_j-c_0), h_j\neq 0, \ \textrm{for} \ (i,j)\in \{(2,3), (3,2)\}.$$
If $\frac{\displaystyle f_j'}{\displaystyle f_j}=h_j=c_0\neq 0$, then $f_j(x^1)=c_je^{c_0x^1}$, with
$c_j\in \mathbb R\setminus \{0\}$. If $h_j\neq 0$ and $h_j\neq c_0$, hence $h_j$ is different from $0$ and $c_0$ in any point of a maximal open interval $J_1\subseteq I_1$, then
$$1=\frac{h_j'}{h_j(h_j-c_0)}=\frac{h_j'}{c_0(h_j-c_0)}-\frac{h_j'}{c_0h_j}=\frac{1}{c_0}\Big[\frac{(h_j-c_0)'}{h_j-c_0}-\frac{h_j'}{h_j}\Big],$$
which, by integration, gives
$$\frac{f_j'(x^1)}{f_j(x^1)}=h_j(x^1)=\frac{c_0}{1-c_je^{c_0x^1}}$$ on $J_1$, where $c_j\in \mathbb R\setminus \{0\}$, $c_je^{c_0x^1}\neq 1$
everywhere on $J_1$. Since $J_1$ is maximal, we deduce that $h_j$ has the above expression on $I_1$. So,
$$f_j(x^1)=\frac{k_j}{e^{-c_0x^1}-c_j}$$
on $I_1$, with $k_j\in \mathbb R\setminus \{0\}$, $c_je^{c_0x^1}\neq 1$ everywhere on $I_1$.
\end{proof}

\begin{corollary}\label{ps1c}
Under the hypotheses of Theorem \ref{ps1b}, if $h_1=0$, then $(I,g)$ is a flat Riemannian manifold if and only if either both of the functions $f_2$ and $f_3$ are constant or
$$f_i=k_i\in \mathbb R\setminus \{0\} \ \ \textrm{and} \ \ f_j(x^1)=\frac{k_j}{x^1-c_j} \ \ \textrm{for} \ (i,j)\in \{(2,3), (3,2)\},$$
where $k_j\in \mathbb R\setminus \{0\}$, $c_j\in \mathbb R\setminus I_1$. Moreover, $f_1(x^1)=k_1\in \mathbb R\setminus \{0\}$.
\end{corollary}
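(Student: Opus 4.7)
The plan is to deduce this corollary directly from Theorem \ref{ps1b} by specializing to the case $h_1 = 0$. The hypothesis $h_1 = \frac{f_1'}{f_1} = 0$ is equivalent to $f_1$ being a nonzero constant $k_1 \in \mathbb R \setminus \{0\}$, which immediately gives the last assertion of the corollary. With this, an antiderivative of $\frac{1}{f_1} = \frac{1}{k_1}$ is simply $F(x^1) = \frac{x^1}{k_1}$ (up to the usual additive constant, which may be absorbed into $c_j$).

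Next, I would walk through the three alternatives furnished by Theorem \ref{ps1b}. Case (1) yields $f_2, f_3$ both constant, which matches the first alternative of the corollary. In case (2), one has $f_2 = k_2$ constant and
$$f_3(x^1) = \frac{k_3}{F(x^1) - c_3} = \frac{k_3}{\frac{x^1}{k_1} - c_3} = \frac{k_1 k_3}{x^1 - k_1 c_3},$$
and case (3) is symmetric. Relabelling the nonzero constant $k_1 k_j$ again as $k_j$ and the real constant $k_1 c_j$ again as $c_j$, this becomes $f_j(x^1) = \frac{k_j}{x^1 - c_j}$, exactly the second alternative in the corollary.

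Finally, I would verify the translation of the admissibility condition on $c_j$. In Theorem \ref{ps1b} the constraint is $c_j \notin \{F(x^1)\mid x^1 \in I_1\}$. Since $F(x^1) = \frac{x^1}{k_1}$ is an affine bijection from $I_1$ onto $F(I_1)$, the condition $c_j \neq F(x^1)$ for all $x^1 \in I_1$ is equivalent, after the rescaling $c_j \mapsto k_1 c_j$, to $c_j \notin I_1$, which is the condition stated in the corollary. The converse direction is immediate since the functions listed are precisely those covered by Theorem \ref{ps1b} with $F(x^1) = x^1/k_1$, so plugging them back in shows flatness.

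There is no real obstacle here; the argument is a careful bookkeeping exercise in constants. The only point requiring a little attention is making sure the rescaling of $k_j$ and $c_j$ is tracked consistently so that the condition $c_j \in \mathbb R \setminus I_1$ in the corollary is genuinely equivalent to the condition $c_j \in \mathbb R \setminus \{F(x^1)\mid x^1 \in I_1\}$ appearing in Theorem \ref{ps1b}.
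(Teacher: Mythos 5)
Your proposal is correct and follows essentially the same route as the paper's own proof: both specialize Theorem \ref{ps1b} to $h_1=0$, note that $f_1$ is then a nonzero constant so that the antiderivative $F$ of $1/f_1$ is affine, and relabel the constants $k_j$, $c_j$ to convert the condition $c_j\notin\{F(x^1)\mid x^1\in I_1\}$ into $c_j\notin I_1$. Your write-up is if anything slightly more explicit about the bookkeeping of the rescaled constants than the paper's version.
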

\begin{proof}
From Theorem \ref{ps1b}, we deduce that either both of the functions $f_2$ and $f_3$ are constant or
$f_i=k_i\in \mathbb R\setminus \{0\}$ and $f_j=\frac{\displaystyle a_j}{\displaystyle F-b_j}$ for $(i,j)\in\nolinebreak \{(2,3), (3,2)\}$, where $F=F(x^1)$ is an antiderivative of $\frac{\displaystyle 1}{\displaystyle f_1}$, $a_j\in \mathbb R \setminus \{0\}$ and $b_j\in \mathbb R$ such that $F(x^1)-b_j\neq 0$ for any $x^1\in I_1$.

From $h_1=0$, we deduce that $f_1$ is constant, $F(x^1)=a_1x^1+b_1$, with $a_1\neq 0, b_1\in \mathbb R$, hence $f_j(x^1)=\displaystyle\frac{k_j}{x^1-c_j}$, with $k_j\in \mathbb R\setminus \{0\}, c_j\in \mathbb R$ such that $c_j\notin I_1$.
\end{proof}

From the previous corollary, we deduce
\begin{corollary}
Under the hypotheses of Theorem \ref{ps1b}, if $h_1=0$, then $(\mathbb R^3,g)$ is a flat Riemannian manifold if and only if all the functions $f_1$, $f_2$, and $f_3$ are constant.
\end{corollary}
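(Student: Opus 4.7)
The plan is to derive this as an immediate specialization of Corollary \ref{ps1c} to the case $I_1=\mathbb R$. First I would invoke Corollary \ref{ps1c}, which under the assumption $h_1=0$ (equivalently, $f_1$ constant) characterizes flatness by saying that either all three functions are constant, or else $f_i=k_i\in\mathbb R\setminus\{0\}$ and $f_j(x^1)=\dfrac{k_j}{x^1-c_j}$ for $(i,j)\in\{(2,3),(3,2)\}$, where the constant $c_j$ must satisfy $c_j\in\mathbb R\setminus I_1$.

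The key observation is then trivial: when $I_1=\mathbb R$, the set $\mathbb R\setminus I_1$ is empty, so no admissible choice of $c_j$ exists. This rules out the second alternative of Corollary \ref{ps1c} entirely, leaving only the possibility that $f_2$ and $f_3$ are constant (along with $f_1$, which is already constant by $h_1=0$).

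Conversely, if all three $f_i$ are constant, then every $a_{ij}=0$, so $R=0$ and $(\mathbb R^3,g)$ is flat trivially (this is also the baseline remark made just before Theorem \ref{ps1b}). There is no real obstacle here; the proof is essentially a one-line deduction, and the only care needed is to state clearly why the non-constant branch of Corollary \ref{ps1c} is incompatible with $I_1=\mathbb R$ because the pole $c_j$ of $f_j$ would necessarily lie inside the domain, contradicting $f_j$ being nowhere zero and smooth on all of $\mathbb R$.
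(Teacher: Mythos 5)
Your proof is correct and is essentially the paper's own argument: the paper also deduces this statement immediately from Corollary \ref{ps1c}, the point being exactly that the non-constant branch requires $c_j\in\mathbb R\setminus I_1$, which is empty when $I_1=\mathbb R$. Your added remarks (the trivial converse and the pole of $f_j$ landing inside the domain) are consistent with the paper and require no changes.
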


\begin{theorem}\label{ps11n}
If $f_1=f_1(x^2)$, $f_2=f_2(x^3)$, $f_3=f_3(x^1)$, then $(I,g)$ is a flat Riemannian manifold if and only if one of the following assertions holds:

(1) $f_1=k_1\in\mathbb R\setminus \{0\}$, $f_2=k_2\in\mathbb R\setminus \{0\}$, and $f_3=k_3\in\mathbb R\setminus \{0\}$;

(2) $f_1=k_1\in\mathbb R\setminus \{0\}$, $f_2=k_2\in\mathbb R\setminus \{0\}$, and $f_3(x^1)=\frac{\displaystyle k_3}{\displaystyle x^1-c_3}$, with $k_3\in\mathbb R\setminus \{0\}$, $c_3\in\mathbb R\setminus I_1$;

(3) $f_1(x^2)=\frac{\displaystyle k_1}{\displaystyle x^2-c_1}$, $f_2=k_2\in\mathbb R\setminus \{0\}$, and $f_3=k_3\in\mathbb R\setminus \{0\}$, with $k_1\in\mathbb R\setminus \{0\}$, $c_1\in\mathbb R\setminus I_2$;

(4) $f_1=k_1\in\mathbb R\setminus \{0\}$, $f_2(x^3)=\frac{\displaystyle k_2}{\displaystyle x^3-c_2}$, and $f_3=k_3\in\mathbb R\setminus \{0\}$, with $k_2\in\mathbb R\setminus \{0\}$, $c_2\in\mathbb R\setminus I_3$.
\end{theorem}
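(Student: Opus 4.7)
The plan is to use the dimension-3 fact (already invoked by the authors) that Ricci-flatness implies flatness, so it suffices to impose $\R=0$ on the six Ricci components listed in Section 2. Under the hypothesis $f_1=f_1(x^2)$, $f_2=f_2(x^3)$, $f_3=f_3(x^1)$, I would first observe that three of the six connection coefficients vanish identically, namely
\[
a_{13}=\frac{f_3}{f_1}\frac{\partial f_1}{\partial x^3}=0,\quad a_{21}=\frac{f_1}{f_2}\frac{\partial f_2}{\partial x^1}=0,\quad a_{32}=\frac{f_2}{f_3}\frac{\partial f_3}{\partial x^2}=0,
\]
while $a_{12}=f_2(x^3)h_1(x^2)$, $a_{23}=f_3(x^1)h_2(x^3)$, $a_{31}=f_1(x^2)h_3(x^1)$, where $h_i=f_i'/f_i$.

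Plugging these into the three off-diagonal Ricci formulas from Section 2, they collapse to
\[
\R(E_1,E_2)=a_{31}a_{12},\qquad \R(E_1,E_3)=E_1(a_{23}),\qquad \R(E_2,E_3)=a_{12}a_{23},
\]
which, after expansion, reduce (up to nonzero factors) to the three product conditions $f_1'(x^2)f_3'(x^1)=0$, $f_3'(x^1)f_2'(x^3)=0$, $f_1'(x^2)f_2'(x^3)=0$ on $I$. Since the three derivatives depend on three independent variables, these identities force at most one of $f_1$, $f_2$, $f_3$ to be nonconstant. This immediately splits the analysis into four cases matching the four assertions of the theorem.

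In the case where all $f_i$ are constant, the connection coefficients vanish and the metric is trivially flat, giving (1). In the case where $f_2=k_2$ and $f_3=k_3$ are constant but $f_1=f_1(x^2)$ is not, the only surviving $a_{ij}$ is $a_{12}=k_2 h_1$, and the only nontrivial diagonal Ricci component becomes $\R(E_1,E_1)=E_2(a_{12})-a_{12}^2=k_2^2(h_1'-h_1^2)$, while $\R(E_2,E_2)$ and $\R(E_3,E_3)$ vanish automatically. Solving the ODE $h_1'=h_1^2$ on a maximal open subinterval where $h_1\neq 0$ yields $h_1(x^2)=-1/(x^2-c_1)$, hence $f_1(x^2)=k_1/(x^2-c_1)$ with $c_1\notin I_2$, and a maximality/continuity argument (identical to the one used in the proof of Theorem \ref{ps1b}) extends this to all of $I_2$. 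This is assertion (3). The remaining cases (2) and (4) are perfectly symmetric, differing only in which surviving coefficient ($a_{31}$ or $a_{23}$) produces the nonzero diagonal Ricci component, and solving the corresponding ODE $h_i'=h_i^2$ gives the reciprocal-linear form in the appropriate variable.

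The conceptual steps are all easy; the main place where care is needed is verifying the off-diagonal reduction and ruling out that two of $f_1,f_2,f_3$ can simultaneously be nonconstant — this is where the independence of the variables $x^1,x^2,x^3$ is essential, since a product such as $f_1'(x^2)f_3'(x^1)$ vanishing identically on $I$ really does force one of the factors to vanish identically (in contrast to a single-variable product). Once that structural observation is in place, the rest is the same constant-plus-reciprocal-linear classification already obtained in Theorem \ref{ps1b}, specialized to a single nonzero $h_i$. The converse direction is a direct substitution check in each of the four listed families.
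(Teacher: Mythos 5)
Your proposal is correct and follows essentially the same route as the paper: reduce $\R=0$, via Ricci-flatness in dimension $3$, to the product conditions $f_1'f_2'=f_2'f_3'=f_3'f_1'=0$ (which by independence of the variables force at most one nonconstant $f_i$) together with the Riccati-type equation $h_i'=h_i^2$, solved on a maximal interval and extended by the same continuity argument as in Theorem \ref{ps1b}. One harmless slip: in the case $f_2=k_2$, $f_3=k_3$, the component $\R(E_2,E_2)=E_2(a_{12})-a_{12}^2=k_2^2(h_1'-h_1^2)$ does not vanish automatically but reproduces exactly the equation coming from $\R(E_1,E_1)$, so your conclusion is unaffected.
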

\begin{proof}
We have: $a_{12}=f_2\frac{\displaystyle f_1'}{\displaystyle f_1}$, $a_{23}=f_3\frac{\displaystyle f_2'}{\displaystyle f_2}$, $a_{31}=f_1\frac{\displaystyle f_3'}{\displaystyle f_3}$, and $a_{13}=a_{21}=a_{32}=\nolinebreak 0$, and we get:
\begin{align*}
\R(E_1,E_1)&=E_1(a_{31})+E_2(a_{12})-a_{12}^2-a_{31}^2,\\
\R(E_2,E_2)&=E_2(a_{12})+E_3(a_{23})-a_{12}^2-a_{23}^2,\\
\R(E_3,E_3)&=E_1(a_{31})+E_3(a_{23})-a_{31}^2-a_{23}^2,\\
\R(E_1,E_2)&=a_{31}a_{12},\\
\R(E_1,E_3)&=E_1(a_{23}),\\
\R(E_2,E_3)&=a_{12}a_{23}.
\end{align*}

Then, $\R=0$ if and only if
\begin{equation*}
\left\{
    \begin{aligned}
&f_1^2\Big(\frac{\displaystyle f_3'}{\displaystyle f_3}\Big)'-\Big(f_1\frac{\displaystyle f_3'}{\displaystyle f_3}\Big)^2+ f_2^2\Big(\frac{\displaystyle f_1'}{\displaystyle f_1}\Big)'-\Big(f_2\frac{\displaystyle f_1'}{\displaystyle f_1}\Big)^2=0\\
&f_2^2\Big(\frac{\displaystyle f_1'}{\displaystyle f_1}\Big)'-\Big(f_2\frac{\displaystyle f_1'}{\displaystyle f_1}\Big)^2+ f_3^2\Big(\frac{\displaystyle f_2'}{\displaystyle f_2}\Big)'-\Big(f_3\frac{\displaystyle f_2'}{\displaystyle f_2}\Big)^2=0\\
&f_1^2\Big(\frac{\displaystyle f_3'}{\displaystyle f_3}\Big)'-\Big(f_1\frac{\displaystyle f_3'}{\displaystyle f_3}\Big)^2+ f_3^2\Big(\frac{\displaystyle f_2'}{\displaystyle f_2}\Big)'-\Big(f_3\frac{\displaystyle f_2'}{\displaystyle f_2}\Big)^2=0\\
&f_1'f_2'=f_2'f_3'=f_3'f_1'=0
    \end{aligned}
  \right. ,
\end{equation*}
which gives
\begin{equation}\label{eqT2.5}
\left\{
    \begin{aligned}
&f_1^2(h_3'-h_3^2)=- f_2^2(h_1'-h_1^2)\\
&f_2^2(h_1'-h_1^2)=- f_3^2(h_2'-h_2^2)\\
&f_1^2(h_3'-h_3^2)=- f_3^2(h_2'-h_2^2)\\
&f_1'f_2'=f_2'f_3'=f_3'f_1'=0
    \end{aligned}
  \right. .
\end{equation}

From the first three equations, we deduce that $h_i'=h_i^2$ for any $i\in\nolinebreak\{1,2,3\}$, and, from the last three equations, we infer that two of the functions $f_1$, $f_2$, and $f_3$ must be constant.

For $f_1$, $f_2$, and $f_3$ constant on $I$, the system is verified.

If, for example, we have $f_3'\neq 0$ at every point of a maximal open interval $J_1\subseteq I_1$, then $h_3\neq 0$ at every point of $J_1$, and we infer that $\frac{\displaystyle h_3'}{\displaystyle h_3^2}=1$ on $J_1$, which, by integration, gives
$$\frac{f_3'(x^1)}{f_3(x^1)}=h_3(x^1)=\frac{-1}{x^1-c_3},$$
where $c_3\in\mathbb R\setminus J_1$. It follows that $h_3$ has the above expression on $I_1$. Then,
$$f_3(x^1)=\frac{k_3}{x^1-c_3}$$
on $I_1$, with $k_3\in\mathbb R\setminus \{0\}$, $c_3\in\mathbb R\setminus I_1$.
Hence, $f_3'(x^1)\neq 0$ for any $x^1\in I_1$, so $f_1'=0$ on $I_2$, and $f_2'=0$ on $I_3$, i.e., $f_1$ and $f_2$ are constant functions.

Due to the circular symmetry of (\ref{eqT2.5}), we get the statement of the theorem.
\end{proof}

\begin{theorem}\label{ps11n1}
If $f_1=f_1(x^1)$, $f_2=f_2(x^1)$, $f_3=f_3(x^2)$, then $(I,g)$ is a flat Riemannian manifold if and only if one of the following assertions holds:

(1) $f_2=k_2\in \mathbb R\setminus\{0\}$ and $f_3=k_3\in \mathbb R\setminus\{0\}$;

(2) $f_2=k_2\in \mathbb R\setminus\{0\}$ and $f_3(x^2)=\frac{\displaystyle k_3}{\displaystyle x^2-c_3}$, with $k_3\in \mathbb R\setminus\{0\}$, $c_3\in \mathbb R\setminus I_2$;

(3) $f_2=\frac{\displaystyle k_2}{\displaystyle F-c_2}$ and $f_3=k_3\in \mathbb R\setminus \{0\}$, where $F$ is an antiderivative of $\frac{\displaystyle 1}{\displaystyle f_1}$, $k_2\in \mathbb R \setminus \{0\}$, and $c_2\in \mathbb R\setminus \{F(x^1)|\ x^1\in I_1\}$.
\end{theorem}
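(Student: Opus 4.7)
The plan is to substitute the prescribed dependencies into the $a_{ij}$ from Section~1, reduce the Ricci tensor equations to an ODE system in $h_2$ and $h_3$, and then split into cases in the same style as in the proof of Theorem~\ref{ps1b}.

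First I would observe that under $f_1=f_1(x^1)$, $f_2=f_2(x^1)$, $f_3=f_3(x^2)$, the only nonvanishing coefficients are $a_{21}=f_1h_2$ and $a_{32}=f_2h_3$, where $h_2=f_2'/f_2$ is a function of $x^1$ alone and $h_3=f_3'/f_3$ is a function of $x^2$ alone. Plugging into the formulas of Section~2, the components $\R(E_1,E_3)$ and $\R(E_2,E_3)$ vanish identically, while the remaining ones reduce to $E_1(a_{21})=a_{21}^2$ (from $\R(E_1,E_1)$), $E_2(a_{32})=a_{32}^2$ (from $\R(E_3,E_3)$), and $E_1(a_{32})=0$ (from $\R(E_1,E_2)$); the vanishing of $\R(E_2,E_2)$ is then automatic.

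Rewriting these in terms of logarithmic derivatives produces the ODE system
\begin{equation*}
h_2'=h_2(h_2-h_1),\qquad h_3'=h_3^2,\qquad h_2h_3=0.
\end{equation*}
The last equation is the crucial one: since $h_2$ depends only on $x^1$ and $h_3$ depends only on $x^2$, the identity $h_2(x^1)h_3(x^2)=0$ on $I_1\times I_2$ forces either $h_2\equiv 0$ on $I_1$ or $h_3\equiv 0$ on $I_2$, which splits the analysis into two clean branches.

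In the branch $h_3\equiv 0$, the function $f_3$ is a nonzero constant and the equation $h_2'=h_2(h_2-h_1)$ is the one already integrated in the proof of Theorem~\ref{ps1b}: either $h_2\equiv 0$ (yielding assertion (1)) or $f_2=k_2/(F-c_2)$ with $F$ an antiderivative of $1/f_1$ (yielding assertion (3)). In the branch $h_2\equiv 0$, the function $f_2$ is a nonzero constant; integrating $h_3'=h_3^2$ on a maximal open subinterval of $I_2$ on which $h_3$ is nonzero gives $h_3(x^2)=-1/(x^2-c_3)$, hence $f_3(x^2)=k_3/(x^2-c_3)$, and the usual maximality argument (mirroring the one in Theorem~\ref{ps1b}) forces $c_3\in\mathbb{R}\setminus I_2$; this is assertion (2). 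The main obstacle is purely bookkeeping—both ODEs have already been solved in the earlier results, so I would invoke those computations rather than repeat them.
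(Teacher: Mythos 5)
Your proposal is correct and follows essentially the same route as the paper: the same reduction of the Ricci components to the system $h_2'=h_2(h_2-h_1)$, $h_3'=h_3^2$, $h_2h_3=0$ (with $\R(E_2,E_2)=0$ automatic), the same integration of $h_3'=h_3^2$ via a maximal-subinterval argument yielding $f_3=k_3/(x^2-c_3)$ with $c_3\in\mathbb R\setminus I_2$, and the same appeal to the proof of Theorem~\ref{ps1b} for the branch $f_2=k_2/(F-c_2)$. Your observation that $h_2(x^1)h_3(x^2)=0$ with separated variables immediately forces $h_2\equiv 0$ on $I_1$ or $h_3\equiv 0$ on $I_2$ is a slightly cleaner justification of the case split than the paper's, but it does not change the substance of the argument.
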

\begin{proof}
We have: $a_{21}=f_1\frac{\displaystyle f_2'}{\displaystyle f_2}$, $a_{32}=f_2\frac{\displaystyle f_3'}{\displaystyle f_3}$, and $a_{12}=a_{13}=a_{23}=a_{31}=0$, and we get:
\begin{align*}
\R(E_1,E_1)&=E_1(a_{21})-a_{21}^2,\\
\R(E_2,E_2)&=E_1(a_{21})+E_2(a_{32})-a_{21}^2-a_{32}^2,\\
\R(E_3,E_3)&=E_2(a_{32})-a_{32}^2,\\
\R(E_1,E_2)&=E_1(a_{32}),\\
\R(E_1,E_3)&=\R(E_2,E_3)=0.
\end{align*}

Then, $\R=0$ if and only if
\begin{equation*}
\left\{
    \begin{aligned}
&      f_1'\frac{\displaystyle f_2'}{\displaystyle f_2}+f_1\Big[\Big(\frac{\displaystyle f_2'}{\displaystyle f_2}\Big)'-\Big(\frac{\displaystyle f_2'}{\displaystyle f_2}\Big)^2\Big]=0\\
&\Big(\frac{\displaystyle f_3'}{\displaystyle f_3}\Big)'=\Big(\frac{\displaystyle f_3'}{\displaystyle f_3}\Big)^2\\
&f_2'f_3'=0
    \end{aligned}
  \right. ,
\end{equation*}
which gives
\begin{equation*}
\left\{
    \begin{aligned}
&h_1h_2+h_2'-h_2^2=0\\
&h_3'=h_3^2\\
&h_2 h_3=0
    \end{aligned}
  \right. .
\end{equation*}

For $h_2=0$ and $h_3=0$, i.e., $f_2$ and $f_3$ constant on $I$, the system is verified.

If we have $h_3\neq 0$, that is, $h_3(x^2)\neq 0$ at every point of a maximal open interval $J_2\subseteq I_2$, then $\frac{\displaystyle h_3'}{\displaystyle h_3^2}=1$ on $J_2$, which, by integration, gives
$$\frac{f_3'(x^2)}{f_3(x^2)}=h_3(x^2)=\frac{-1}{x^2-c_3},$$
where $c_3\in\mathbb R\setminus J_2$. Due to continuity, $h_3(x^2)$ has the above expression on $I_2$. So,
$$f_3(x^2)=\frac{k_3}{x^2-c_3}\ \textrm{ and }h_3(x^2)\neq 0$$
everywhere on $I_2$, with $k_3\in\mathbb R\setminus \{0\}$, $c_3\in\mathbb R\setminus I_2$. It follows that $h_2=0$, that is, $f_2=k_2\in \mathbb R\setminus \{0\}$ on $I$.

If we have $h_3=0$, i.e., $f_3=k_3\in \mathbb R\setminus \{0\}$ on $I$, and $h_2\neq 0$, then, since $h_2'=\nolinebreak h_2(h_2-h_1)$, with the same proof as in Theorem \ref{ps1b}, we infer that $h_2(x^1)\neq 0$ for any $x^1\in I_1$, and $f_2= \displaystyle\frac{k_2}{F-c_2}$, where $F=F(x^1)$ is an antiderivative of $\frac{\displaystyle 1}{\displaystyle f_1}$, $k_2\in \mathbb R \setminus \{0\}$, and $c_2\in \mathbb R$ such that $F(x^1)\neq c_2$ for any $x^1\in I_1$.
\end{proof}

\begin{theorem}\label{ps11n2}
If $f_1=f_1(x^1)$, $f_2=f_2(x^1)$, $f_3=f_3(x^3)$, then $(I,g)$ is a flat Riemannian manifold if and only if one of the following assertions holds:

(1) $f_2=k_2\in \mathbb R\setminus\{0\}$;

(2) $f_2= \displaystyle\frac{k_2}{F-c_2}$, where $F$ is an antiderivative of $\frac{\displaystyle 1}{\displaystyle f_1}$, $k_2\in \mathbb R \setminus \{0\}$, and $c_2\in \mathbb R$ such that $F(x^1)\neq c_2$ for any $x^1\in I_1$.
\end{theorem}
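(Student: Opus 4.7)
The plan is to reduce the flatness condition to a single ODE for $f_2$ in terms of $f_1$, exploiting the drastic simplifications that the hypotheses force on the structural coefficients. Explicitly, since $\partial f_1/\partial x^2 = \partial f_1/\partial x^3 = \partial f_2/\partial x^2 = \partial f_2/\partial x^3 = \partial f_3/\partial x^1 = \partial f_3/\partial x^2 = 0$, the only coefficient among the $a_{ij}$ that can be nonzero is
$$a_{21} = f_1\frac{f_2'}{f_2} = f_1 h_2,$$
where the prime denotes differentiation with respect to $x^1$. In particular, $f_3$ does not appear in any $a_{ij}$.

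Substituting into the Ricci formulas recorded at the beginning of Section~2, every off-diagonal component vanishes identically, $\R(E_3,E_3) = 0$ automatically, and
$$\R(E_1,E_1) = \R(E_2,E_2) = E_1(a_{21}) - a_{21}^2.$$
Since every Ricci-flat $3$-manifold is flat, $(I,g)$ is flat if and only if this single expression vanishes. Writing $E_1 = f_1\partial_{x^1}$ and dividing by $f_1^2$ yields the scalar ODE
$$h_2' = h_2(h_2 - h_1),$$
which is precisely the ODE that appears in the proof of Theorem~\ref{ps1b}.

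It then remains only to solve this ODE. If $h_2 \equiv 0$, then $f_2$ is a nonzero constant, yielding case (1). Otherwise, letting $J_1 \subseteq I_1$ be a maximal open subinterval on which $h_2$ is nowhere zero, dividing by $h_2$ and observing that $h_2 - h_1 = (\ln(f_2/f_1))'$ leads upon integration to $f_2 = k_2/(F - c_2)$, with $F$ an antiderivative of $1/f_1$, $k_2 \in \mathbb{R}\setminus\{0\}$, and $F(x^1) - c_2 \neq 0$ on $J_1$; continuity of $h_2$ and maximality of $J_1$ force $J_1 = I_1$, giving case (2). The decisive point, worth flagging explicitly, is that $f_3$ never enters the Ricci conditions, so it may be any smooth nowhere-zero function of $x^3$, and this is why no constraint on $f_3$ appears in the statement. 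The only nontrivial step is the ODE analysis itself, and it is a direct transcription of the argument already carried out in Theorem~\ref{ps1b}, so no new obstacle arises.
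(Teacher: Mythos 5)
Your proposal is correct and follows essentially the same route as the paper: you reduce the hypotheses to the single nonzero coefficient $a_{21}=f_1h_2$, observe that Ricci-flatness (hence flatness, in dimension $3$) collapses to the one scalar equation $E_1(a_{21})-a_{21}^2=0$, i.e., $h_2'=h_2(h_2-h_1)$, and then solve it exactly as in Theorem~\ref{ps1b}, including the maximality-plus-continuity argument forcing $J_1=I_1$ and the observation that $f_3$ is unconstrained. No gaps; this matches the paper's proof.
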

\begin{proof}
We have
$$a_{21}=f_1\frac{\displaystyle f_2'}{\displaystyle f_2}$$ and $$a_{ij}=0 \ \ \textrm{for} \ \ (i,j)\in\{(1,2),(1,3),(2,3),(3,1),(3,2)\},$$ and we get:
\begin{align*}
\R(E_1,E_1)&=\R(E_2,E_2)=E_1(a_{21})-a_{21}^2,\\
\R(E_3,E_3)&=\R(E_1,E_2)=\R(E_1,E_3)=\R(E_2,E_3)=0.
\end{align*}

Then, $\R=0$ if and only if
\begin{equation*}
f_1'\frac{\displaystyle f_2'}{\displaystyle f_2}+f_1\Big[\Big(\frac{\displaystyle f_2'}{\displaystyle f_2}\Big)'-\Big(\frac{\displaystyle f_2'}{\displaystyle f_2}\Big)^2\Big]=0,
    \end{equation*}
which gives
\begin{equation*}
h_1h_2+h_2'-h_2^2=0.
\end{equation*}

With the same proof as in Theorem \ref{ps1b}, we infer that $f_2$ is constant on $I$, or $f_2=\nolinebreak \displaystyle\frac{k_2}{F-c_2}$, where $F=F(x^1)$ is an antiderivative of $\frac{\displaystyle 1}{\displaystyle f_1}$, $k_2\in \mathbb R \setminus \{0\}$, and $c_2\in \mathbb R$ such that $F(x^1)\neq c_2$ for any $x^1\in I_1$.
\end{proof}

\begin{theorem}\label{ps11n3}
If $f_1=f_1(x^3)$, $f_2=f_2(x^3)$, $f_3=f_3(x^1)$, then $(I,g)$ is a flat Riemannian manifold if and only if one of the following assertions holds:

(1) $f_1=k_1\in \mathbb R\setminus\{0\}$, $f_2=k_2\in \mathbb R\setminus\{0\}$, and $f_3=k_3\in \mathbb R\setminus\{0\}$;

(2) $f_1(x^3)=\frac{\displaystyle k_1}{\displaystyle x^3-c_1}$, $f_2=k_2\in \mathbb R\setminus\{0\}$, and $f_3=k_3\in \mathbb R\setminus\{0\}$, with $k_1\in \mathbb R\setminus\{0\}$, $c_1\in \mathbb R\setminus I_3$;

(3) $f_1=k_1\in \mathbb R\setminus\{0\}$, $f_2(x^3)=\frac{\displaystyle k_2}{\displaystyle x^3-c_2}$, and $f_3=k_3\in \mathbb R\setminus\{0\}$, with $k_2\in \mathbb R\setminus\{0\}$, $c_2\in \mathbb R\setminus I_3$;

(4) $f_1=k_1\in \mathbb R\setminus\{0\}$, $f_2=k_2\in \mathbb R\setminus\{0\}$, and $f_3(x^1)=\frac{\displaystyle k_3}{\displaystyle x^1-c_3}$, with $k_3\in \mathbb R\setminus\{0\}$, $c_3\in \mathbb R\setminus I_1$;

(5) $f_1(x^3)=\frac{\displaystyle k_1}{\displaystyle x^3-c_1}$, $f_2=k_2\in \mathbb R\setminus\{0\}$, and $f_3(x^1)=\frac{\displaystyle k_3}{\displaystyle x^1-c_3}$, with $k_1, k_3\in \mathbb R\setminus\{0\}$, $c_1\in \mathbb R\setminus I_3$, $c_3\in \mathbb R\setminus I_1$;

(6) $f_1(x^3)=\frac{\displaystyle 1}{\displaystyle F_1^{-1}(\eta_1 x^3+d_1)}$ or \\
$f_1(x^3)=\left\{
\begin{aligned}
&\frac{\displaystyle 1}{\displaystyle F_1^{-1}(-\varepsilon_1 x^3+2\varepsilon_1x_0^3+c_1)} &\textrm{ for }x^3<x_0^3\\
&\frac{\displaystyle 1}{\displaystyle \lim_{t^3\searrow x_0^3}F_1^{-1}(\varepsilon_1t^3+c_1)} &\textrm{ for }x^3=x_0^3\\
&\frac{\displaystyle 1}{\displaystyle F_1^{-1}(\varepsilon_1 x^3+c_1)} &\textrm{ for }x^3>x_0^3
\end{aligned}
\right. ,$
$f_3(x^1)=\frac{\displaystyle 1}{\displaystyle F_3^{-1}(\eta_3 x^1+d_3)}$ or \\
$f_3(x^1)=\left\{
\begin{aligned}
&\frac{\displaystyle 1}{\displaystyle F_3^{-1}(-\varepsilon_3 x^1+2\varepsilon_3x_0^1+c_3)} &\textrm{ for }x^1<x_0^1\\
&\frac{\displaystyle 1}{\displaystyle \lim_{t^1\searrow x_0^1}F_3^{-1}(\varepsilon_3t^1+c_3)} &\textrm{ for }x^1=x_0^1\\
&\frac{\displaystyle 1}{\displaystyle F_3^{-1}(\varepsilon_3 x^1+c_3)} &\textrm{ for }x^1>x_0^1
\end{aligned}
\right. ,$
and $f_2=k_2\in \mathbb R\setminus\{0\}$, where $F_i$ is an antiderivative of the function
$$z\mapsto \frac{1}{\sqrt{-2k_i\ln \vert z\vert +r_i}}$$
on a maximal connected domain $D_i\subseteq \mathbb R\setminus \{0\}$, with $k_3=-k_1\in\nolinebreak \mathbb R\setminus\nolinebreak \{0\}$, $r_i\in \mathbb R$, $i\in \{1,3\}$, and $\varepsilon_i:=\sign (-k_iz_0^i)$, $x_0^j\in\nolinebreak I_j$, $c_i=\lim_{z\rightarrow z_0^i}F_i(z)-\varepsilon_i x_0^j$ for $z_0^i\in \partial D_i\setminus \{0\}$, and $d_i\in \mathbb R$, $\eta_i\in \{\pm1\}$ such that $\eta_i x^j+d_i\in F_i(D_i)$ for any $x^j\in I_j$, $(i,j)\in\nolinebreak \{(1,3),(3,1)\}$.
\end{theorem}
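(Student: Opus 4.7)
The plan is to specialize the Ricci tensor formulas from Section~2 to this ansatz, extract the algebraic constraint they impose, and solve the resulting reduced ODE system case by case. Since every $3$-dimensional Ricci-flat manifold is flat, it suffices to set $\R=0$. Substituting the given dependences into the coefficients leaves only $a_{13}=f_3 h_1$, $a_{23}=f_3 h_2$, $a_{31}=f_1 h_3$ nonzero, with $h_1,h_2$ functions of $x^3$ alone and $h_3$ a function of $x^1$ alone. Inserted into the Ricci formulas they produce
\begin{align*}
\R(E_1,E_1)&=f_1^2(h_3'-h_3^2)+f_3^2(h_1'-h_1^2-h_1h_2),\\
\R(E_2,E_2)&=f_3^2(h_2'-h_2^2-h_1h_2),\\
\R(E_3,E_3)&=f_1^2(h_3'-h_3^2)+f_3^2(h_1'+h_2'-h_1^2-h_2^2),
\end{align*}
while $\R(E_1,E_2)=\R(E_2,E_3)=0$ automatically and $\R(E_1,E_3)=f_1f_3\,h_2h_3$. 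The vanishing of $\R(E_1,E_3)$ yields the crucial algebraic constraint $h_2h_3=0$, which splits the argument into two branches.

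Branch~A ($h_2\equiv 0$, i.e.\ $f_2=k_2$). The three diagonal Ricci equations collapse to the single condition $f_1^2(h_3'-h_3^2)+f_3^2(h_1'-h_1^2)=0$. Since $h_3'-h_3^2$ and $f_3$ depend only on $x^1$ while $h_1'-h_1^2$ and $f_1$ depend only on $x^3$, separation of variables yields a constant $k\in\mathbb{R}$ such that $h_3'-h_3^2=kf_3^2$ and $h_1'-h_1^2=-kf_1^2$. For $k=0$ the now-standard Riccati analysis (each $h_i$ is either $\equiv 0$ or $-1/(x^j-c_i)$, argued by maximality as in Theorems~\ref{ps1b}--\ref{ps11n2}) recovers cases (1), (2), (4), (5). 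Branch~B ($h_2\not\equiv 0$) forces $h_3\equiv 0$ pointwise, hence $f_3=k_3$; the remaining system quickly gives $h_1h_2=0$, and on any maximal subinterval of $I_3$ where $h_2\neq 0$ the equation $h_2'=h_2^2$ integrates to $h_2(x^3)=-1/(x^3-c_2)$, which extends globally by maximality, yielding case~(3) together with $h_1\equiv 0$.

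The principal obstacle is Branch~A with $k\neq 0$, which should produce case~(6). Setting $u_i:=1/f_i$ transforms the two equations into the autonomous second-order ODEs $u_3u_3''=-k$ (in $x^1$) and $u_1u_1''=k$ (in $x^3$); writing $k_3:=k=-k_1$ and multiplying by $2u_i'$ gives the first integral $(u_i')^2=-2k_i\ln|u_i|+r_i$. I would then let $F_i$ be an antiderivative of $z\mapsto 1/\sqrt{-2k_i\ln|z|+r_i}$ on a maximal connected domain $D_i\subseteq\mathbb{R}\setminus\{0\}$ where the radicand is positive, so that wherever $u_i'$ keeps its sign, $F_i(u_i)=\eta_i x^j+d_i$, i.e.\ $f_i=1/F_i^{-1}(\eta_i x^j+d_i)$. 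The delicate step is handling what happens at an interior $x_0^j\in I_j$ where $u_i'$ vanishes: this occurs precisely when $u_i$ touches a boundary point $z_0^i\in\partial D_i\setminus\{0\}$, and past this value the motion of $u_i$ reverses, so two monotone branches must be glued along $x_0^j$ with matching limit, producing the piecewise formula of (6). Checking that the parameters fit the constraints $\varepsilon_i=\sign(-k_iz_0^i)$ and $c_i=\lim_{z\to z_0^i}F_i(z)-\varepsilon_i x_0^j$, and that each such piecewise $f_i$ does solve the original ODE, will complete the argument.
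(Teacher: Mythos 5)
Your proposal is correct and follows essentially the same route as the paper: the same reduction to $\R=0$ with the identical Ricci components, the same split on the algebraic constraint $h_2h_3=0$ (your Branch~B reproduces the paper's maximal-interval argument giving case~(3), and Branch~A with $k=0$ its Riccati analysis giving cases (1), (2), (4), (5)), and for $k\neq 0$ the same substitution $u_i=1/f_i$ leading to $u_iu_i''=-k_i$, the first integral $(u_i')^2=-2k_i\ln|u_i|+r_i$, the antiderivative $F_i$, and the gluing of two monotone branches at the unique critical point (your sign convention for $k$ is the opposite of the paper's, which is absorbed by $k_3=-k_1$). The only steps you defer --- that $u_i'$ is strictly monotone so at most one gluing point occurs, and the converse verification that the glued function is smooth and solves the ODE (done in the paper via Lagrange's theorem) --- are exactly the details the paper supplies, and your plan for them is sound.
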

\begin{proof}
We have: $a_{13}=f_3\frac{\displaystyle f_1'}{\displaystyle f_1}$, $a_{23}=f_3\frac{\displaystyle f_2'}{\displaystyle f_2}$, $a_{31}=f_1\frac{\displaystyle f_3'}{\displaystyle f_3}$, and $a_{12}=a_{21}=a_{32}=\nolinebreak 0$, and we get:
\begin{align*}
\R(E_1,E_1)&=E_1(a_{31})-a_{31}^2+E_3(a_{13})-a_{13}^2-a_{13}a_{23},\\
\R(E_2,E_2)&=E_3(a_{23})-a_{23}^2-a_{13}a_{23},\\
\R(E_3,E_3)&=E_1(a_{31})-a_{31}^2+E_3(a_{13})-a_{13}^2+E_3(a_{23})-a_{23}^2,\\
\R(E_1,E_3)&=E_1(a_{23}),\\
\R(E_2,E_3)&=E_2(a_{13}),\\
\R(E_1,E_2)&=0.
\end{align*}

Then, $\R=0$ if and only if
\begin{equation*}
\left\{
    \begin{aligned}
&      f_1^2\Big[\Big(\frac{\displaystyle f_3'}{\displaystyle f_3}\Big)'-\Big(\frac{\displaystyle f_3'}{\displaystyle f_3}\Big)^2\Big]+f_3^2\Big[\Big(\frac{\displaystyle f_1'}{\displaystyle f_1}\Big)'-\Big(\frac{\displaystyle f_1'}{\displaystyle f_1}\Big)^2\Big]=0\\
&\Big(\frac{\displaystyle f_2'}{\displaystyle f_2}\Big)'=\Big(\frac{\displaystyle f_2'}{\displaystyle f_2}\Big)^2\\
&f_1'f_2'=f_2'f_3'=0
    \end{aligned}
  \right. ,
\end{equation*}
which gives
\begin{equation*}
\left\{
    \begin{aligned}
&f_1^2(h_3'-h_3^2)+f_3^2(h_1'-h_1^2)=0\\
&h_2'=h_2^2\\
&h_1h_2=h_2h_3=0
    \end{aligned}
  \right. .
\end{equation*}

If we have $h_2\neq 0$, then $h_2(x^3)\neq 0$ at every point of a maximal open interval $J_3\subseteq I_3$, and it follows that $\frac{\displaystyle h_2'}{\displaystyle h_2^2}=1$ on $J_3$, which, by integration, gives
$$\frac{f_2'(x^3)}{f_2(x^3)}=h_2(x^3)=\frac{-1}{x^3-c_2},$$
where $c_2\in\mathbb R\setminus J_3$. We deduce that $J_3=I_3$, so $h_2$ has the above expression on $I_3$, and $h_1=h_3=0$ on $I$, i.e., $f_1=k_1\in \mathbb R\setminus\{0\}$, and $f_3=k_3\in \mathbb R\setminus\{0\}$, which satisfy the system. We get
$$f_2(x^3)=\frac{k_2}{x^3-c_2}$$
on $I$, with $k_2\in\mathbb R\setminus \{0\}$, $c_2\in\mathbb R\setminus I_3$.

Let now $h_2=0$, that is, $f_2=k_2\in \mathbb R\setminus \{0\}$ on $I$. Then, the last two equations of the system are verified. The first equation of the previous system is equivalent to
$$\frac{\displaystyle 1}{\displaystyle f_1^2}\Big[\Big(\frac{\displaystyle f_1'}{\displaystyle f_1}\Big)'-\Big(\frac{\displaystyle f_1'}{\displaystyle f_1}\Big)^2\Big]=-\frac{\displaystyle 1}{\displaystyle f_3^2}\Big[\Big(\frac{\displaystyle f_3'}{\displaystyle f_3}\Big)'-\Big(\frac{\displaystyle f_3'}{\displaystyle f_3}\Big)^2\Big].$$
Since the left term from the above equation depends only on $x^3$, and the right term depends only on $x^1$, we deduce that they must be constant; therefore, there exists $k\in \mathbb R$ such that
\begin{equation}\label{eq1T2.8}
\frac{\displaystyle f_1f_1''-2(f_1')^2}{\displaystyle f_1^4}=k,\textrm{ and }\
\frac{\displaystyle f_3f_3''-2(f_3')^2}{\displaystyle f_3^4}=-k.
\end{equation}

If $k=0$, from the last equation, we get $h_3'-h_3^2=0$, which has the solutions $h_3=0$, i.e., $f_3$ constant on $I$, and $h_3(x^1)=\frac {\displaystyle -1}{\displaystyle x^1-c_3}$, i.e.,
$f_3(x^1)=\frac {\displaystyle k_3}{\displaystyle x^1-c_3}$, for any $x^1\in I_1$, with $c_3\in \mathbb R\setminus I_1$, $k_3\in \mathbb R\setminus \{0\}$. From the first equation of (\ref{eq1T2.8}), we similarly infer that
$h_1=0$, i.e., $f_1$ constant on $I$, or $h_1(x^3)=\frac {\displaystyle -1}{\displaystyle x^3-c_1}$, i.e.,
$f_1(x^3)=\frac {\displaystyle k_1}{\displaystyle x^3-c_1}$, for any $x^3\in I_3$, with $c_1\in \mathbb R\setminus I_3$, $k_1\in \mathbb R\setminus \{0\}$.

Let now $k\in \mathbb R\setminus\{0\}$.
Defining $g:=\frac {\displaystyle 1}{\displaystyle f_1}$, the first equation of (\ref{eq1T2.8}) becomes $g''g=-k$. It follows that $g''(x^3)\neq 0$ for any $x^3\in I_3$, so $g'$ is strictly monotone on $I_3$.

Suppose that there exists $x_0^3\in I_3$ such that $f_1'(x_0^3)= 0$, which is equivalent to $g'(x_0^3)=\nolinebreak 0$. Then, $g'(x^3)$ has a different sign to the left than to the right of $x_0^3$.

Let us denote $I_3=:(a^3,b^3)$, where $a^3,b^3\in \overline{\mathbb R}$, and consider, e.g., that $g'<0$ to the left and $g'>0$ to the right of $x_0^3$. This happens when $g''>0$, that is, $k>0$ and $g<0$, or $k<0$ and $g>0$. Then, $g$ is strictly decreasing on $J_1:=(a^3,x_0^3)$ and strictly increasing on $J_2:=(x_0^3,b^3)$. We will define $u_i:g(J_i)\rightarrow \mathbb R$, $u_i(g(x^3)):=g'(x^3)$ for any $x^3\in J_i$, $i\in \{1,2\}$. Since $u_i(z)=(g'\circ g|_{L_i}^{-1})(z)$ for $z\in g(J_i)$, $u_i$ is continously differentiable, $i\in \{1,2\}$. We infer that  $u_i'(g(x^3))g'(x^3)=g''(x^3)$; hence,
$u_i'(g(x^3))u_i(g(x^3))=\frac{\displaystyle -k}{\displaystyle g(x^3)}$ for $x^3\in J_i$, and we deduce that $u_i'(z)u_i(z)=\frac{\displaystyle -k}{\displaystyle z}$ for any $z\in L_i:=g(J_i)\subset \mathbb R\setminus \{0\}$, $i\in \{1,2\}$. It follows that $u_i^2(z)=-2k\ln \vert z\vert +r_i$ on $L_i$, with $r_i\in \mathbb R$ such that $-2k\ln \vert z\vert +r_i>0$ for any $z\in L_i$, $i\in \{1,2\}$. Because
$$g'(x_0^3)=0=\lim_{x^3\rightarrow x_0^3}g'(x^3)=\lim_{x^3\nearrow x_0^3}u_1(g(x^3))=\lim_{x^3\searrow x_0^3}u_2(g(x^3)),$$
we infer that $r_1=r_2=2k\ln \vert g(x_0^3)\vert$. Since $g'<0$ to the left, and $g'>0$ to the right of $x_0^3$, we have $u_1<0$ and $u_2>0$, so
$$u_1(g(x^3))=-\sqrt{-2k\ln \vert g(x^3)\vert +r_1},\ \textrm{ and }u_2(g(x^3))=\sqrt{-2k\ln \vert g(x^3)\vert +r_1},$$
and we get:
\begin{align*}
\frac{\displaystyle g'(x^3)}{\displaystyle \sqrt{-2k\ln \vert g(x^3)\vert +r_1}}&=-1 \textrm{ for }x^3<x_0^3,\\
\frac{\displaystyle g'(x^3)}{\displaystyle \sqrt{-2k\ln \vert g(x^3)\vert +r_1}}&=1 \hspace{10pt} \textrm{ for }x^3>x_0^3.
\end{align*}
Denote $g(x_0^3)=:z_0$ and let $F_1$ be an antiderivative of the function
$$z\mapsto \frac{1}{\sqrt{-2k_1\ln \vert z\vert +r_1}}$$
on the maximal connected domain $D_1\subseteq \mathbb R\setminus \{0\}$ such that $z_0\in \partial D_1$, where $k_1:=k$, $r_1:=2k_1\ln \vert z_0\vert$. Then,
$$(F_1\circ g)'(x^3)=\left\{
\begin{aligned}
-1\ \textrm{ for }x^3\in (a^3,x_0^3)\\
\ 1\ \textrm{ for }x^3\in (x_0^3,b^3)
\end{aligned}
\right. ,
$$
thus
$$F_1 (g(x^3))=\left\{
\begin{aligned}
-x^3+c_2\ \textrm{ for }x^3\in (a^3,x_0^3)\\
x^3+c_1\ \textrm{ for }x^3\in (x_0^3,b^3)
\end{aligned}
\right. .
$$
Since
$$-x_0^3+c_2=\lim_{x^3\nearrow x_0^3}F_1(g(x^3))=\lim_{z\searrow z_0}F_1(z)= \lim_{x^3\searrow x_0^3}F_1(g(x^3))=x_0^3+c_1,$$
we get
$$c_1=\lim_{z\searrow z_0}F_1(z)-x_0^3,\textrm{ and } c_2=2x_0^3+c_1=\lim_{z\searrow z_0}F_1(z)+x_0^3.$$
Thus,
$$g(x_0^3)=z_0=\lim_{x^3\searrow x_0^3}F_1^{-1}(x^3+c_1),$$
and
$$F_1 (g(x^3))=\left\{
\begin{aligned}
&-x^3+2x_0^3+c_1\ \textrm{ for }x^3\in (a^3,x_0^3)\\
&\ \ x^3+c_1\ \hspace{40pt}\textrm{ for }x^3\in (x_0^3,b^3)
\end{aligned}
\right. ;
$$
hence,
\begin{equation*}
g(x^3)=\left\{
\begin{aligned}
&F_1^{-1}(-x^3+2x_0^3+c_1)&&\ \textrm{ for }x^3\in (a^3,x_0^3)\\
&\lim_{t^3\searrow x_0^3}F_1^{-1}(t^3+c_1)&&\ \textrm{ for }x^3=x_0^3\\
&F_1^{-1}(x^3+c_1)&&\ \textrm{ for }x^3\in (x_0^3,b^3)
\end{aligned}
\right. .
\end{equation*}

Consider now that $g'>0$ to the left and $g'<0$ to the right of $x_0^3$. This happens when $g''<0$, that is, $k>0$ and $g>0$, or $k<0$ and $g<0$. Using a similar argument as above, with the same definitions of $z_0$, $k_1$, $r_1$, and $F_1$, we get
$$F_1 (g(x^3))=\left\{
\begin{aligned}
x^3+c_2\ \textrm{ for }x^3\in (a^3,x_0^3)\\
-x^3+c_1\ \textrm{ for }x^3\in (x_0^3,b^3)
\end{aligned}
\right. ,
$$
where
$$c_1=\lim_{z\nearrow z_0}F_1(z)+x_0^3,\textrm{ and } c_2=-2x_0^3+c_1=\lim_{z\nearrow z_0}F_1(z)-x_0^3;$$
hence,
\begin{equation*}
g(x^3)=\left\{
\begin{aligned}
&F_1^{-1}(x^3-2x_0^3+c_1)&&\ \textrm{ for }x^3\in (a^3,x_0^3)\\
&\lim_{t^3\searrow x_0^3}F_1^{-1}(-t^3+c_1)&&\ \textrm{ for }x^3=x_0^3\\
&F_1^{-1}(-x^3+c_1)&&\ \textrm{ for }x^3\in (x_0^3,b^3)
\end{aligned}
\right. .
\end{equation*}

The two cases considered above can be expressed in a single one:
for any sign of $g''$, with the same definitions of $z_0$, $k_1$, $r_1$, and $F_1$, denoting $\varepsilon_1:=\sign (-k_1z_0)$, we get
$$F_1 (g(x^3))=\left\{
\begin{aligned}
-\varepsilon_1 x^3+c_2\ \textrm{ for }x^3\in (a^3,x_0^3)\\
\varepsilon_1 x^3+c_1\ \textrm{ for }x^3\in (x_0^3,b^3)
\end{aligned}
\right. ,
$$
where
$$c_1=\lim_{z\rightarrow z_0}F_1(z)-\varepsilon_1 x_0^3\ \textrm{ and }
c_2=2\varepsilon_1x_0^3+c_1=\lim_{z\rightarrow z_0}F_1(z)+\varepsilon_1 x_0^3;$$
hence,
\begin{equation}\label{eq2T2.8}
g(x^3)=\left\{
\begin{aligned}
&F_1^{-1}(-\varepsilon_1 x^3+2\varepsilon_1 x_0^3+c_1)&&\ \textrm{ for }x^3\in (a^3,x_0^3)\\
&\lim_{t^3\searrow x_0^3}F_1^{-1}(\varepsilon_1 t^3+c_1)&&\ \textrm{ for }x^3=x_0^3\\
&F_1^{-1}(\varepsilon_1 x^3+c_1)&&\ \textrm{ for }x^3\in (x_0^3,b^3)
\end{aligned}
\right. ,
\end{equation}
and
\begin{equation}\label{eq3T2.8}
f_1(x^3)=\left\{
\begin{aligned}
&\frac{\displaystyle 1}{\displaystyle F_1^{-1}(-\varepsilon_1 x^3+2\varepsilon_1x_0^3+c_1)} &\textrm{ for }x^3<x_0^3\\
&\frac{\displaystyle 1}{\displaystyle \lim_{t^3\searrow x_0^3}F_1^{-1}(\varepsilon_1t^3+c_1)} &\textrm{ for }x^3=x_0^3\\
&\frac{\displaystyle 1}{\displaystyle F_1^{-1}(\varepsilon_1 x^3+c_1)} &\textrm{ for }x^3>x_0^3
\end{aligned}
\right. .
\end{equation}

Conversely, for $k\in \mathbb R\setminus\{0\}$, let $F_1$ be an antiderivative of the function
$$z\mapsto \frac{1}{\sqrt{-2k_1\ln \vert z\vert +r_1}}$$
on a maximal connected domain $D_1\subseteq \mathbb R\setminus \{0\}$, where $r_1\in \mathbb R$, $k_1:=\nolinebreak k$. Let $z_0\in \partial D_1\setminus \{0\}$, so we have $r_1=2k_1\ln \vert z_0\vert$, and let $x_0^3\in I_3=:(a^3,b^3)$, $\varepsilon_1:=\nolinebreak \sign (-k_1z_0)$, and $f_1:\nolinebreak I_3\rightarrow \mathbb R$ be defined by (\ref{eq3T2.8}), where $c_1:=\lim_{z\rightarrow z_0}F_1(z)-\varepsilon_1 x_0^3$. The function $g=\frac {\displaystyle 1}{\displaystyle f_1}:\nolinebreak I_3\rightarrow \mathbb R$ satisfies (\ref{eq2T2.8}), is continuous, nowhere zero, verifies $g(x_0^3)=z_0$, and
$$g'(x^3)=\left\{
\begin{aligned}
&-\varepsilon_1\sqrt{-2k_1\ln \vert F_1^{-1}(-\varepsilon_1 x^3+2\varepsilon_1 x_0^3+c_1)\vert +r_1}
&\ \textrm{ for }x^3\in (a^3,x_0^3)\\
&\ \varepsilon_1\sqrt{-2k_1\ln \vert F_1^{-1}(\varepsilon_1 x^3+c_1)\vert +r_1}
&\ \textrm{ for }x^3\in (x_0^3,b^3)
\end{aligned}
\right. .
$$
Because \\
$\lim_{x^3\nearrow x_0^3}\sqrt{-2k_1\ln \vert F_1^{-1}(-\varepsilon_1 x^3+2\varepsilon_1 x_0^3+c_1)\vert +r_1}=$
\begin{align*}
&=\lim_{x^3\searrow x_0^3}\sqrt{-2k_1\ln \vert F_1^{-1}(\varepsilon_1 x^3+c_1)\vert +r_1}\\
&=\sqrt{-2k_1\ln \vert g(x_0^3)\vert +r_1}=0,
\end{align*}
we infer, through Lagrange's Theorem, that there exists
$g'(x_0^3)=0$.

It follows that $g'$ is continuous, and
$$g'(x^3)=\left\{
\begin{aligned}
&-\varepsilon_1\sqrt{-2k_1\ln \vert g(x^3)\vert +r_1}
&&\ \textrm{ for }x^3\in (a^3,x_0^3)\\
&\ 0 &&\ \textrm{ for }x^3=x_0^3\\
&\ \varepsilon_1\sqrt{-2k_1\ln \vert g(x^3)\vert +r_1}
&&\ \textrm{ for }x^3\in (x_0^3,b^3)
\end{aligned}
\right. ,
$$
from which we get
$$g''(x^3)=\left\{
\begin{aligned}
&-\varepsilon_1\frac{\displaystyle -k_1g'(x^3)}{\displaystyle g(x^3)\sqrt{-2k_1\ln \vert g(x^3)\vert +r_1}} &&\ \textrm{ for }x^3\in (a^3,x_0^3)\\
&\ \varepsilon_1\frac{\displaystyle -k_1g'(x^3)}{\displaystyle g(x^3)\sqrt{-2k_1\ln \vert g(x^3)\vert +r_1}} &&\ \textrm{ for }x^3\in (x_0^3,b^3)
\end{aligned}
\right. ,
$$
that is,
$$g''(x^3)=\frac{\displaystyle -k_1}{\displaystyle g(x^3)}\ \textrm{ for }x^3\in I_3\setminus\{x_0^3\},$$
which, through Lagrange's Theorem, leads to the existence of
$$g''(x_0^3)=\lim_{x^3\rightarrow x_0^3}\frac{\displaystyle -k_1}{\displaystyle g(x^3)}=
\frac{\displaystyle -k_1}{\displaystyle g(x_0^3)}.$$
So,
$$g''(x^3)=\frac{\displaystyle -k}{\displaystyle g(x^3)}\ \textrm{ for any }x^3\in I_3, $$
from which it also follows that $g$ is a smooth function on $I_3$; hence, $f_1=\frac {\displaystyle 1}{\displaystyle g}$ is smooth on $I_3$ and satisfies $f_1'(x_0^3)=0$ and $\frac{\displaystyle f_1f_1''-2(f_1')^2}{\displaystyle f_1^4}=k$.

The second equation of (\ref{eq1T2.8}),
\begin{equation*}\label{eq4T2.8}
\frac{\displaystyle f_3f_3''-2(f_3')^2}{\displaystyle f_3^4}=-k,
\end{equation*}
for $k\in \mathbb R\setminus\{0\}$, is equivalent to $g''g=k$, where $g=\frac {\displaystyle 1}{\displaystyle f_3}$. Using a proof similar to the one above, we deduce that, for any $x_0^1\in I_1=:(a^1,b^1)$, $a^1,b^1\in \overline{\mathbb R}$, a function $g:I_1\rightarrow\mathbb R$ is smooth and satisfies $g''g=k$ and $g'(x_0^1)=0$, i.e., $f_3:=\frac {\displaystyle 1}{\displaystyle g}$ is smooth and satisfies the second equation of (\ref{eq1T2.8}), with $f_3'(x_0^1)=0$, if and only if, for $F_3$ an antiderivative of the function
$$z\mapsto \frac{1}{\sqrt{-2k_3\ln \vert z\vert +r_3}}$$
on a maximal connected domain $D_3\subseteq \mathbb R\setminus \{0\}$, where $r_3\in \mathbb R$, $k_3:=-k$, and for $z_0\in\nolinebreak \partial D_1\setminus\nolinebreak \{0\}$ and $\varepsilon_3:=\sign (-k_3z_0)$, so $r_3=2k_3\ln \vert z_0\vert$, we have
\begin{equation*}
g(x^1)=\left\{
\begin{aligned}
&F_3^{-1}(-\varepsilon_3 x^1+2\varepsilon_3 x_0^1+c_3)&&\ \textrm{ for }x^1\in (a^1,x_0^1)\\
&\lim_{t^1\searrow x_0^1}F_3^{-1}(\varepsilon_3 t^1+c_3)&&\ \textrm{ for }x^1=x_0^1\\
&F_3^{-1}(\varepsilon_3 x^1+c_3)&&\ \textrm{ for }x^1\in (x_0^1,b^1)
\end{aligned}
\right. ,
\end{equation*}
where $c_3=\lim_{z\rightarrow z_0}F_3(z)-\varepsilon_3 x_0^1$, that is,
$$f_3(x^1)=\left\{
\begin{aligned}
&\frac{\displaystyle 1}{\displaystyle F_3^{-1}(-\varepsilon_3 x^1+2\varepsilon_3x_0^1+c_3)} &\textrm{ for }x^1<x_0^1\\
&\frac{\displaystyle 1}{\displaystyle \lim_{t^1\searrow x_0^1}F_3^{-1}(\varepsilon_3t^1+c_3)} &\textrm{ for }x^1=x_0^1\\
&\frac{\displaystyle 1}{\displaystyle F_3^{-1}(\varepsilon_3 x^1+c_3)} &\textrm{ for }x^1>x_0^1
\end{aligned}
\right. .$$

In view of the idea of the demonstrations above, we will now characterize the solutions of (\ref{eq1T2.8}) which have no critical points. We notice that a function $f_i:\nolinebreak I_j\rightarrow\nolinebreak \mathbb R$ is a solution for one of the equations of (\ref{eq1T2.8}) and has no critical points, i.e., $f_i'(x^j)\neq 0$ for any $x^j\in I_j$, $(i,j)=(1,3)$ or $(i,j)=(3,1)$, if and only if $g_i''g_i=-k_i$ and $g_i'(x^j)\neq 0$ everywhere on $I_j$, where $g_i:=\frac {\displaystyle 1}{\displaystyle f_i}$, $k_1=k$, and $k_3=-k$, which is equivalent to
$$u_i'(g_i(x^j))u_i(g_i(x^j))=\frac{\displaystyle -k_i}{\displaystyle g_i(x^j)}$$
for $x^j\in I_j$, where $u_i:g_i(I_j)\rightarrow \mathbb R$, $u_i(g_i(x^j)):=g_i'(x^j)$, that is,
$$u_i^2(z)=-2k_i\ln \vert z\vert +r_i$$
for $z\in L_j:=g_i(I_j)$, with $r_i>2k_i\ln \vert z\vert$ for any $z\in L_j$, i.e.,
$$u_i(g_i(x^j))=\sqrt{-2k_i\ln \vert g_i(x^j)\vert +r_i},\ \textrm{ or }u_i(g_i(x^j))=-\sqrt{-2k_i\ln \vert g_i(x^j)\vert +r_i}.$$
For $F_i$ an antiderivative of the function
$$z\mapsto \frac{1}{\sqrt{-2k_i\ln \vert z\vert +r_i}}$$
on a maximal connected domain $D_i\subseteq \mathbb R\setminus \{0\}$, where $k_3=-k_1\in\nolinebreak \mathbb R\setminus\nolinebreak \{0\}$, $r_i\in \mathbb R$, $i\in \{1,3\}$, the above assertion becomes
$$(F_i\circ g_i)'(x^j)=1\ \textrm{ on }I_j,\textrm{ or }\ (F_i\circ g_i)'(x^j)=-1\ \textrm{ on }I_j,$$
that is,
$$F_i(g_i(x^j))=x^j+d_i\ \textrm{ on }I_j,\textrm{ or }\ F_i(g_i(x^j))=-x^j+d_i\ \textrm{ on }I_j,$$
where $d_i\in \mathbb R$ such that $x^j+d_i\in F_i(D_i)$, or $-x^j+d_i\in F_i(D_i)$, respectively, for any $x^j\in I_j$, which is equivalent to
$$g_i(x^j)=F_i^{-1}(x^j+d_i)\ \textrm{ on }I_j,\textrm{ or }\ g_i(x^j)=F_i^{-1}(-x^j+d_i)\ \textrm{ on }I_j,$$
that is,
$$f_i(x^j)=\frac{\displaystyle 1}{\displaystyle F_i^{-1}(\eta_i x^j+d_i)}$$
on $I_j$, where $d_i\in \mathbb R$, $\eta_i\in \{\pm1\}$ such that $\eta_i x^j+d_i\in F_i(D_i)$ for any $x^j\in I_j$, which completes the proof.
\end{proof}

\begin{theorem}\label{ps11n4}
If $f_1=f_1(x^2)$, $f_2=f_2(x^1)$, $f_3=f_3(x^3)$, then $(I,g)$ is a flat Riemannian manifold if and only if one of the following assertions holds:

(1) $f_1=k_1\in \mathbb R\setminus\{0\}$, and $f_2=k_2\in \mathbb R\setminus\{0\}$;

(2) $f_1(x^2)=\frac{\displaystyle k_1}{\displaystyle x^2-c_1}$, and $f_2=k_2\in \mathbb R\setminus\{0\}$, with $k_1\in \mathbb R\setminus\{0\}$, $c_1\in \mathbb R\setminus I_2$;

(3) $f_1=k_1\in \mathbb R\setminus\{0\}$, and $f_2(x^1)=\frac{\displaystyle k_2}{\displaystyle x^1-c_2}$, with $k_2\in \mathbb R\setminus\{0\}$, $c_2\in \mathbb R\setminus I_1$;

(4) $f_1(x^2)=\frac{\displaystyle k_1}{\displaystyle x^2-c_1}$, and $f_2(x^1)=\frac{\displaystyle k_2}{\displaystyle x^1-c_2}$, with $k_1, k_2\in \mathbb R\setminus\{0\}$, $c_1\in\nolinebreak \mathbb R\setminus\nolinebreak I_2$, $c_2\in \mathbb R\setminus I_1$;

(5) $f_1(x^2)=\frac{\displaystyle 1}{\displaystyle F_1^{-1}(\eta_1 x^2+d_1)}$ or \\
$f_1(x^2)=\left\{
\begin{aligned}
&\frac{\displaystyle 1}{\displaystyle F_1^{-1}(-\varepsilon_1 x^2+2\varepsilon_1x_0^2+c_1)} &&\textrm{for }x^2<x_0^2\\
&\frac{\displaystyle 1}{\displaystyle \lim_{z^2\searrow x_0^2}F_1^{-1}(\varepsilon_1z^2+c_1)} &&\textrm{for }x^2=x_0^2\\
&\frac{\displaystyle 1}{\displaystyle F_1^{-1}(\varepsilon_1 x^2+c_1)} &&\textrm{for }x^2>x_0^2
\end{aligned}
\right. ,$
$f_2(x^1)=\frac{\displaystyle 1}{\displaystyle F_2^{-1}(\eta_2 x^1+d_2)}$ or \\
$f_2(x^1)=\left\{
\begin{aligned}
&\frac{\displaystyle 1}{\displaystyle F_2^{-1}(-\varepsilon_2 x^1+2\varepsilon_2x_0^1+c_2)} &\textrm{for }x^1<x_0^1\\
&\frac{\displaystyle 1}{\displaystyle \lim_{z^1\searrow x_0^1}F_2^{-1}(\varepsilon_2z^1+c_2)} &\textrm{for }x^1=x_0^1\\
&\frac{\displaystyle 1}{\displaystyle F_2^{-1}(\varepsilon_2 x^1+c_2)} &\textrm{for }x^1>x_0^1
\end{aligned}
\right. ,$
where $F_i$ is an antiderivative of the function
$$y\mapsto \frac{1}{\sqrt{-2k_i\ln \vert y\vert +r_i}}$$
on a maximal connected domain $D_i\subseteq \mathbb R\setminus \{0\}$, with $k_2=-k_1\in \mathbb R\setminus \{0\}$, $r_i\in \mathbb R$, $i\in \{1,2\}$, and $\varepsilon_i:=\sign (-k_iz_0^i)$, $x_0^j\in\nolinebreak I_j$,
$c_i=\lim_{z\rightarrow z_0^i}F_i(z)-\varepsilon_i x_0^j$ for $z_0^i\in \partial D_i\setminus \{0\}$, and $d_i\in \mathbb R$, $\eta_i\in\nolinebreak \{\pm 1\}$ such that $\eta_i x^j+d_i\in F_i(D_i)$ for any $x^j\in I_j$, $(i,j)\in\nolinebreak \{(1,2),(2,1)\}$.
\end{theorem}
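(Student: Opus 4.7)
The plan is first to compute the coefficients $a_{ij}$ under the stated hypotheses. Since $f_1$ depends only on $x^2$, $f_2$ only on $x^1$, and $f_3$ only on $x^3$, a direct substitution into the definitions gives $a_{13} = a_{23} = a_{31} = a_{32} = 0$, so the only possibly nonzero coefficients are $a_{12} = f_2 h_1$ and $a_{21} = f_1 h_2$, where $h_i := f_i'/f_i$. A crucial consequence is that $f_3$ does not enter any $a_{ij}$, hence not the curvature either — this is why no condition on $f_3$ appears in any case of the theorem.

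Substituting into the Ricci formulas from Section 2, all off-diagonal Ricci components vanish, $\R(E_3,E_3) = 0$ identically, and $\R(E_1,E_1) = \R(E_2,E_2) = E_1(a_{21}) + E_2(a_{12}) - a_{21}^2 - a_{12}^2$. Since a $3$-dimensional Ricci-flat manifold is flat, the flatness condition collapses to the single scalar equation
\[ f_1^2(h_2' - h_2^2) + f_2^2(h_1' - h_1^2) = 0. \]
As $f_1, h_1, h_1'$ depend only on $x^2$ and $f_2, h_2, h_2'$ depend only on $x^1$, rewriting it as $(h_2' - h_2^2)/f_2^2 = -(h_1' - h_1^2)/f_1^2$ forces both sides to equal a common constant $k \in \mathbb{R}$, giving the decoupled ODEs $h_2' - h_2^2 = k f_2^2$ and $h_1' - h_1^2 = -k f_1^2$.

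Next, split on the value of $k$. For $k = 0$, each ODE is $h' = h^2$, whose solutions are either $h \equiv 0$ (so the corresponding $f$ is a nonzero constant) or $h(x) = -1/(x - c)$ with $c$ outside the relevant interval (so $f(x) = k_\ast/(x - c)$), as already encountered in Theorem \ref{ps1b}. The four combinations of these alternatives for $f_1$ and $f_2$ produce cases (1)--(4). For $k \neq 0$, the substitution $g_i := 1/f_i$ transforms the ODEs into $g_1'' g_1 = k$ and $g_2'' g_2 = -k$; setting $k_1 := -k$ and $k_2 := k$, so that $k_2 = -k_1 \neq 0$, both become $g_i'' g_i = -k_i$. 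This is exactly the equation treated in the proof of Theorem \ref{ps11n3}, and I would apply that analysis to each $i \in \{1,2\}$ separately: introduce $u_i(g_i(x^j)) := g_i'(x^j)$, integrate $u_i u_i' = -k_i/z$ to get $u_i^2(z) = -2k_i \ln|z| + r_i$, and then invert using a maximal antiderivative $F_i$ of $1/\sqrt{-2k_i \ln|z| + r_i}$, distinguishing on whether $f_i'$ vanishes somewhere in $I_j$. The two possibilities yield respectively the smooth form $f_i = 1/F_i^{-1}(\eta_i x^j + d_i)$ and the piecewise form stated in case (5).

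The main obstacle is the ODE $g''g = -k_i$ near a critical point of $g$, where $g'$ changes sign and the substitution $u_i$ must be defined piecewise on the two monotone branches, with a Lagrange's-Theorem argument needed to glue the derivatives smoothly at the critical point; but this is exactly the delicate step carried out in Theorem \ref{ps11n3}, and here it can be invoked with only a relabeling of variables. The converse direction, that each listed family produces a flat metric, is an immediate substitution into the scalar equation above, noting that under the formulas of case (5) we have $f_1^2(h_2' - h_2^2) = k(f_1 f_2)^2 = -f_2^2(h_1' - h_1^2)$, while in cases (1)--(4) both terms vanish individually.
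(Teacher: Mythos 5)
Your proposal is correct and follows essentially the same route as the paper: the same computation of the $a_{ij}$ and the reduced Ricci tensor, the same collapse to the single equation $f_1^2(h_2'-h_2^2)+f_2^2(h_1'-h_1^2)=0$, the same separation-of-variables argument producing the constant $k$ (yours differs from the paper's only by the immaterial relabeling $k\mapsto -k$), and the same delegation of both the $k=0$ dichotomy and the $k\neq 0$ analysis of $g_i''g_i=-k_i$ (including the piecewise gluing via Lagrange's Theorem at a critical point) to the proof of Theorem \ref{ps11n3}. This matches the paper's proof, which after deriving the separated equations concludes verbatim "with a proof similar to that of Theorem \ref{ps11n3}, we obtain the statement."
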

\begin{proof}
We have: $a_{12}=f_2\frac{\displaystyle f_1'}{\displaystyle f_1}$, $a_{21}=f_1\frac{\displaystyle f_2'}{\displaystyle f_2}$, and $a_{13}=a_{23}=a_{31}=a_{32}=0$, and we get:
\begin{align*}
\R(E_1,E_1)&=\R(E_2,E_2)=E_1(a_{21})-a_{21}^2+E_2(a_{12})-a_{12}^2,\\
\R(E_3,E_3)&=\R(E_1,E_2)=\R(E_1,E_3)=\R(E_2,E_3)=0.
\end{align*}

Then, $\R=0$ if and only if
\begin{equation*}
f_1^2\Big[\Big(\frac{\displaystyle f_2'}{\displaystyle f_2}\Big)'-\Big(\frac{\displaystyle f_2'}{\displaystyle f_2}\Big)^2\Big]+f_2^2\Big[\Big(\frac{\displaystyle f_1'}{\displaystyle f_1}\Big)'-\Big(\frac{\displaystyle f_1'}{\displaystyle f_1}\Big)^2\Big]=0,
\end{equation*}
which is equivalent to
$$\frac{\displaystyle 1}{\displaystyle f_1^2}\Big[\Big(\frac{\displaystyle f_1'}{\displaystyle f_1}\Big)'-\Big(\frac{\displaystyle f_1'}{\displaystyle f_1}\Big)^2\Big]=-\frac{\displaystyle 1}{\displaystyle f_2^2}\Big[\Big(\frac{\displaystyle f_2'}{\displaystyle f_2}\Big)'-\Big(\frac{\displaystyle f_2'}{\displaystyle f_2}\Big)^2\Big].$$
Since the left term from the above equation depends only on $x^2$, and the right term depends only on $x^1$, we deduce that they must be constant; therefore,
$$\frac{\displaystyle f_1f_1''-2(f_1')^2}{\displaystyle f_1^4}=-\frac{\displaystyle f_2f_2''-2(f_2')^2}{\displaystyle f_2^4}=k\in \mathbb R.$$
Further, with a proof similar to that of Theorem \ref{ps11n3}, we obtain the statement.
\end{proof}

\section{Applications to warped products}

\begin{theorem}\label{ps11mac8}
If $f_1=1$, $f_2=f_3=f(x^1)$, then the warped product manifold
$$I_1\times_{\frac{1}{f}}( I_2\times I_3)=(I,g)$$
is a flat Riemannian manifold if and only if $f$ is constant.

In this case, the manifold is just a direct product.
\end{theorem}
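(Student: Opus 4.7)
The plan is to reduce this statement to a direct application of Theorem \ref{ps1b}. Under the hypothesis $f_1=1$, $f_2=f_3=f(x^1)$, all three coefficient functions of the diagonal metric depend only on $x^1$ (the constant $f_1$ trivially), so the setting of Theorem \ref{ps1b} applies verbatim. I would then impose the extra constraint $f_2 = f_3 = f$ on the three alternatives (1)--(3) produced by that theorem and show that only the first one is compatible with it.

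To execute this, I would note that in cases (2) and (3) of Theorem \ref{ps1b}, exactly one of the two functions $f_2, f_3$ is a nonzero constant while the other is of the form $k/(F-c)$, where $F$ is an antiderivative of $1/f_1=1$, i.e., $F(x^1)=x^1+\text{const}$. The equality $f_2 = f_3$ would therefore force a nonzero constant to equal a M\"obius-type function $k/(x^1-c)$ on an open interval, which is impossible. Hence only case (1) can occur, yielding $f_2 = f_3 = f \equiv k \in \mathbb{R}\setminus\{0\}$. This handles the only-if direction.

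For the converse, if $f\equiv k$ is a nonzero constant, then the metric becomes
\begin{equation*}
g = (dx^1)^2 + \frac{1}{k^2}(dx^2)^2 + \frac{1}{k^2}(dx^3)^2,
\end{equation*}
which is manifestly a product of three flat line metrics and hence flat. The same formula shows that the warping factor degenerates to a constant, so the warped product $I_1\times_{1/f}(I_2\times I_3)$ collapses into a genuine (direct) Riemannian product, establishing the final clause of the theorem.

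There is essentially no analytical obstacle here: the whole content is structural, turning the constraint $f_2=f_3$ into an elementary incompatibility with cases (2) and (3) of Theorem \ref{ps1b}. The only step that deserves a one-line justification is the non-coincidence of a constant with $k/(x^1-c)$ on an interval, which is immediate.
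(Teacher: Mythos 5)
Your proposal is correct, but it proceeds differently from the paper. The paper's proof of Theorem \ref{ps11mac8} is a fresh direct computation: it evaluates the $a_{ij}$ for this specific ansatz ($a_{21}=a_{31}=f'/f$, all others zero), writes out the Ricci components, and reduces $\R=0$ to the system $\bigl(\frac{f'}{f}\bigr)'=\bigl(\frac{f'}{f}\bigr)^2$ together with $f'=0$ --- the second equation arising from the cross term $a_{21}a_{31}=(f'/f)^2$, which is exactly the constraint $h_2h_3=0$ of Theorem \ref{ps1b} specialized to $h_2=h_3$ --- and concludes immediately that $f$ is constant. You instead invoke Theorem \ref{ps1b} as a black box: since $f_1=1$, $f_2=f_3=f(x^1)$ all depend only on $x^1$, flatness forces one of the alternatives (1)--(3), and with $F(x^1)=x^1+\mathrm{const}$ (an antiderivative of $1/f_1=1$) the alternatives (2) and (3) each pair a nonzero constant with a nonconstant function $k/(x^1-c)$, so the constraint $f_2=f_3$ eliminates them; your one-line justification (a nonzero constant cannot equal $k/(x^1-c)$ on an interval, as $k\neq 0$ makes the latter's derivative nowhere zero) is sound, and the converse direction is trivial either way. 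Your reduction buys economy and makes visible that Theorem \ref{ps11mac8} is literally a corollary of the earlier classification, with no new analysis needed; the paper's self-contained computation, by contrast, displays the mechanism (the product $h_2h_3$ in $\R(E_2,E_2)$ forcing $h^2=0$ once $h_2=h_3=h$) without requiring the reader to re-enter the case structure of Theorem \ref{ps1b}. Both are complete proofs of the stated equivalence and of the final clause that the manifold degenerates to a direct product.
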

\begin{proof}
We have: $a_{21}=a_{31}=\frac{\displaystyle f'}{\displaystyle f}$, and $a_{12}=a_{13}=a_{23}=a_{32}=0$, and we get:
\begin{align*}
\R(E_1,E_1)&=2[E_1(a_{21})-a_{21}^2],\\
\R(E_2,E_2)&=E_1(a_{21})-a_{21}^2-a_{21}a_{31},\\
\R(E_3,E_3)&=E_1(a_{31})-a_{31}^2-a_{21}a_{31},\\
\R(E_1,E_2)&=\R(E_1,E_3)=\R(E_2,E_3)=0.
\end{align*}

Then, $\R=0$ if and only if
\begin{equation*}
\left\{
    \begin{aligned}
&\left(\displaystyle \frac{f'}{f}\right)'=\left(\displaystyle \frac{f'}{f}\right)^2\\
&f'=0
     \end{aligned}
  \right.\ ,
\end{equation*}
which is equivalent to $f$ constant.
\end{proof}

And we can further deduce
\begin{corollary}\label{corT3.8bd}
There do not exist proper flat warped product manifolds of the form
$$I_1\times_{f} ( I_2\times I_3)=\Big(I,\ g=(dx^1)^2+f^2[(dx^2)^2+(dx^3)^2]\Big).$$
\end{corollary}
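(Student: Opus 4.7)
The plan is to obtain this corollary as a direct consequence of Theorem \ref{ps11mac8} after rewriting the warped product metric in the diagonal form \eqref{f6}.

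First, I would identify the coefficient functions of the diagonal representation. Comparing the warped product metric $g=(dx^1)^2+f^2[(dx^2)^2+(dx^3)^2]$ with the general diagonal form $g=\frac{1}{f_1^2}(dx^1)^2+\frac{1}{f_2^2}(dx^2)^2+\frac{1}{f_3^2}(dx^3)^2$ forces $f_1=1$ and $f_2=f_3=\frac{1}{f}$. Since $f:I_1\to\mathbb R\setminus\{0\}$ is smooth and nowhere zero, so is $\tilde f:=\frac{1}{f}$, and $\tilde f$ depends only on $x^1$. Thus the hypotheses of Theorem \ref{ps11mac8} are satisfied with $\tilde f$ playing the role of the function ``$f$'' there, and the manifold in question coincides with the warped product $I_1\times_{\tilde f^{-1}}(I_2\times I_3)$ written in the form of that theorem.

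Applying Theorem \ref{ps11mac8} then yields that $(I,g)$ is flat if and only if $\tilde f=\frac{1}{f}$ is constant on $I_1$, which is in turn equivalent to $f$ being constant. Since a warped product manifold is declared \emph{proper} exactly when its warping function is nonconstant, this forces the nonexistence of proper flat warped products of the given form, as claimed.

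I do not anticipate a serious obstacle: the whole point of this corollary is to translate the classification in Theorem \ref{ps11mac8} into the language of warped products, and the only minor subtlety is keeping track of the reciprocal relation between the coefficient $f_i$ of the diagonal metric \eqref{f6} and the actual warping factor appearing in the standard warped product expression.
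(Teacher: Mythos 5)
Your proposal is correct and is exactly the deduction the paper intends: the substitution $\tilde f=1/f$ (smooth, nowhere zero, depending only on $x^1$) puts the metric in the form of Theorem \ref{ps11mac8}, whose conclusion that flatness forces $\tilde f$, hence $f$, to be constant contradicts properness. The reciprocal relation between the diagonal coefficients in \eqref{f6} and the warping function is the only point to track, and you handled it correctly.
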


\begin{theorem}\label{ps11mac7}
If $f_1=f_2=1$, $f_3=f_3(x^1,x^2)$, then the warped product manifold
$$(I_1\times I_2)\times_{\frac{1}{f_3}} I_3=(I,g)$$
is a flat Riemannian
manifold if and only if
$$f_3(x^1,x^2)=\frac{\displaystyle 1}{\displaystyle c_1x^1+c_2x^2+c_3},$$
with $c_1, c_2, c_3\in \mathbb R$ such that $c_1x^1+c_2x^2+c_3\neq 0$ for any $(x^1,x^2)\in I_1\times I_2$.

If $c_1=c_2=0$, then the manifold is just a direct product.
\end{theorem}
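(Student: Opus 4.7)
The plan is to follow the same pattern as the preceding theorems: read off the coefficients $a_{ij}$ for this choice of $f_1,f_2,f_3$, simplify the Ricci tensor (which, in dimension three, characterizes flatness), and then solve the resulting PDE system for $f_3$.

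First I would note that for $f_1=f_2=1$ and $f_3=f_3(x^1,x^2)$ the only nonvanishing coefficients are
\[a_{31}=\frac{1}{f_3}\,\frac{\partial f_3}{\partial x^1},\qquad a_{32}=\frac{1}{f_3}\,\frac{\partial f_3}{\partial x^2},\]
and $E_i=\partial/\partial x^i$ for $i\in\{1,2\}$. Substituting into the general Ricci expressions from Section~2 yields $\R(E_1,E_3)=\R(E_2,E_3)=0$ identically, and shows that $\R(E_3,E_3)$ is the sum of $\R(E_1,E_1)$ and $\R(E_2,E_2)$. Thus $\R=0$ reduces to
\[\frac{\partial a_{31}}{\partial x^1}=a_{31}^2,\qquad \frac{\partial a_{32}}{\partial x^2}=a_{32}^2,\qquad \frac{\partial a_{32}}{\partial x^1}=a_{31}a_{32},\]
the third equation coming from $\R(E_1,E_2)=0$.

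The crux is the linearizing substitution $w:=1/f_3$. Since $a_{3i}=\partial\ln f_3/\partial x^i=-(1/w)\,\partial w/\partial x^i$, a short computation yields
\[\frac{\partial^2 w}{\partial x^i\,\partial x^j}=w\left(a_{3i}a_{3j}-\frac{\partial a_{3j}}{\partial x^i}\right)\qquad (i,j\in\{1,2\}),\]
so the three conditions above are jointly equivalent to $\partial^2 w/(\partial x^i\,\partial x^j)=0$ for all $i,j\in\{1,2\}$. Since $w$ is independent of $x^3$, this forces $w$ to be affine in $(x^1,x^2)$, i.e.\ $w=c_1x^1+c_2x^2+c_3$; the requirement that $f_3$ be nowhere zero on $I$ becomes $c_1x^1+c_2x^2+c_3\neq 0$ on $I_1\times I_2$, producing the stated formula for $f_3$.

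The converse is immediate: an affine $w$ has vanishing second partials, so all three Ricci equations hold. The final assertion is clear as well, since $c_1=c_2=0$ forces $f_3$ to be constant and the metric then splits as a product of one-dimensional factors. I expect the only nontrivial step to be spotting the substitution $w=1/f_3$; once it is in place, the PDE system collapses at once to the affine condition on $w$, and everything else is bookkeeping using the formulas already established in Section~2.
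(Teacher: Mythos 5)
Your proposal is correct, and it takes a genuinely different --- and far shorter --- route than the paper's. Both arguments reduce flatness to the same system (\ref{T3.3}) via the Ricci components, but from there the paper proceeds by direct integration: it solves each equation as an ODE on maximal connected sets where $\frac{\partial f_3}{\partial x^1}$, resp.\ $\frac{\partial f_3}{\partial x^2}$, is nonvanishing, obtaining local formulas $f_3=\frac{G(x^2)}{x^1-F(x^2)}$ and $f_3=\frac{E(x^1)}{x^2-H(x^1)}$, and then runs a two-level case analysis (Cases I/II with subcases, plus boundary-limit arguments showing the maximal sets exhaust $I_1\times I_2$) to glue these into a global expression, observing only a posteriori that all solutions fit the single formula $\frac{1}{c_1x^1+c_2x^2+c_3}$. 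Your substitution $w=\frac{1}{f_3}$ bypasses all of this: since $w$ is nowhere zero, your identity shows the system is exactly the vanishing of the Hessian of $w$ in $(x^1,x^2)$ --- note the fourth condition $\frac{\partial a_{31}}{\partial x^2}=a_{31}a_{32}$, which you implicitly use, is automatic because $a_{31},a_{32}$ are the partials of $\ln\vert f_3\vert$, so the two mixed conditions coincide --- and on the convex domain $I_1\times I_2$ a vanishing Hessian forces $w$ affine. This buys brevity, treats the vanishing/nonvanishing dichotomy of the partials uniformly (no maximal-set or boundary arguments needed), makes the converse trivially a chain of equivalences, and explains conceptually why the answer is the reciprocal of an affine function. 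What the paper's heavier machinery buys is reusability: the same maximal-set casework carries its proof of Theorem \ref{ps11mv}, where the coupling term $a_{21}a_{31}$ coming from a nonconstant $f_2$ destroys this clean linearization, whereas your trick is specific to $f_1=f_2=1$ (and is implicitly available again in Theorem \ref{ps11ma} once $f$ is shown to be constant).
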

\begin{proof}
We have: $a_{31}=\frac{\displaystyle 1}{\displaystyle f_3}\cdot\frac{\displaystyle \partial f_3}{\displaystyle \partial x^1}$, $a_{32}=\frac{\displaystyle 1}{\displaystyle f_3}\cdot\frac{\displaystyle \partial f_3}{\displaystyle \partial x^2}$, and $a_{12}=a_{13}=a_{21}=a_{23}=0$, and we get:
\begin{align*}
\R(E_1,E_1)&=E_1(a_{31})-a_{31}^2,\\
\R(E_2,E_2)&=E_2(a_{32})-a_{32}^2,\\
\R(E_3,E_3)&=E_1(a_{31})+E_2(a_{32})-a_{31}^2-a_{32}^2,\\
\R(E_1,E_2)&=E_1(a_{32})-a_{31}a_{32},\\
\R(E_1,E_3)&=\R(E_2,E_3)=0.
\end{align*}

Then, $\R=0$ if and only if
\begin{equation}\label{T3.3}
\left\{
    \begin{aligned}
& \frac{\displaystyle \partial }{\displaystyle \partial x^1}\left(\frac{\displaystyle 1}{\displaystyle f_3}\cdot\frac{\displaystyle \partial f_3}{\displaystyle \partial x^1}\right)=\left(\frac{\displaystyle 1}{\displaystyle  f_3}\cdot\frac{\displaystyle \partial f_3}{\displaystyle \partial x^1}\right)^2\\
& \frac{\displaystyle \partial }{\displaystyle \partial x^2}\left(\frac{\displaystyle 1}{\displaystyle f_3}\cdot\frac{\displaystyle \partial f_3}{\displaystyle \partial x^2}\right)=\left(\frac{\displaystyle 1}{\displaystyle  f_3}\cdot\frac{\displaystyle \partial f_3}{\displaystyle \partial x^2}\right)^2\\
& \displaystyle \frac{\partial }{\partial x^1}\left(\displaystyle \frac{1}{f_3}\cdot\frac{\partial f_3}{\partial x^2}\right)=\left(\displaystyle \frac{1}{f_3}\cdot\frac{\partial f_3}{\partial x^1}\right)\left(\displaystyle \frac{1}{f_3}\cdot\frac{\partial f_3}{\partial x^2}\right)
     \end{aligned}
  \right.\ .
\end{equation}

All the functions that appear in the sequel are considered to be smooth and arbitrary unless otherwise specified.

We denote by $l_i:=\frac{\displaystyle 1}{\displaystyle f_3}\cdot\frac{\displaystyle \partial f_3}{\displaystyle \partial x^i}$ for $i\in \{1,2\}$, and we will first analyse the equations of the system.

1. For the first equation:\\
a) Any maximal connected set on which $\frac{\displaystyle \partial f_3}{\displaystyle \partial x^1}(x^1, x^2)\neq 0$ everywhere is of the form $I_1\times J_2$, where $J_2$ is an open subinterval in $I_2$. In such a case, we have
$\frac{\displaystyle 1}{\displaystyle f_3}\cdot\frac{\displaystyle \partial f_3}{\displaystyle \partial x^1}(x^1, x^2)=\frac{\displaystyle -1}{\displaystyle x^1-F(x^2)}$ and
$f_3(x^1, x^2)=\frac{\displaystyle G(x^2)}{\displaystyle x^1-F(x^2)}$ for any $(x^1, x^2)\in I_1\times J_2$, with $G(x^2)\neq 0$ and $F(x^2)\notin I_1$ for any $x^2\in J_2$.\\
b) Any maximal connected set on which $\frac{\displaystyle \partial f_3}{\displaystyle \partial x^1}=0$ is of the form $I_1\times K_2$, where $K_2$ is a subinterval in $I_2$ (possibly even trivial), closed with respect to $I_2$. In such a case, we have $f_3=f_3(x^2)$ on $I_1\times K_2$.

2. For the second equation:\\
a) Any maximal connected set on which $\frac{\displaystyle \partial f_3}{\displaystyle \partial x^2}(x^1, x^2)\neq 0$ everywhere is of the form $J_1\times I_2$, where $J_1$ is an open subinterval in $I_1$. In such a case, we have
$\frac{\displaystyle 1}{\displaystyle f_3} \cdot\frac{\displaystyle \partial f_3}{\displaystyle \partial x^2}(x^1, x^2)=\frac{\displaystyle -1}{\displaystyle x^2-H(x^1)}$ and
$f_3(x^1, x^2)=\frac{\displaystyle E(x^1)}{\displaystyle x^2-H(x^1)}$ for any $(x^1, x^2)\in J_1\times I_2$, with $E(x^1)\neq 0$ and $H(x^1)\notin I_2$ for any $x^1\in J_1$.\\
b) Any maximal connected set on which $\frac{\displaystyle \partial f_3}{\displaystyle \partial x^2}=0$ is of the form $K_1\times I_2$, where $K_1$ is a subinterval in $I_1$ (possibly even trivial), closed with respect to $I_1$. In such a case, we have $f_3=f_3(x^1)$ on $K_1\times I_2$.

We have the following cases.

\vspace{5pt}
Case I: There exists $(x_0^1, x_0^2)\in I_1\times I_2$ such that $\frac{\displaystyle \partial f_3}{\displaystyle \partial x^2}(x_0^1, x_0^2)\neq 0$. Then, $\frac{\displaystyle \partial f_3}{\displaystyle \partial x^2}(x^1, x^2)\neq 0$, hence $l_2(x^1, x^2)\neq 0$, everywhere on a maximal open interval $J_1\times I_2$. From the third equation of (\ref{T3.3}), we have $\frac{\displaystyle 1}{\displaystyle l_2} \cdot\frac{\displaystyle \partial l_2}{\displaystyle \partial x^1}=\frac{\displaystyle 1}{\displaystyle f_3}\cdot\frac{\displaystyle \partial f_3}{\displaystyle \partial x^1}$, hence $l_2(x^1, x^2)=P(x^2)f_3(x^1, x^2)$ for any $(x^1, x^2)\in J_1\times I_2$, with $P(x^2)\neq 0$ for any $x^2\in I_2$, but, from the second equation of (\ref{T3.3}), we have
$l_2(x^1, x^2)=\frac{\displaystyle -1}{\displaystyle x^2-H(x^1)}$, with $H(x^1)\notin I_2$ for any $x^1\in J_1$. We get $f_3(x^1, x^2)=\frac{\displaystyle -1}{\displaystyle P(x^2)(x^2-H(x^1))}$. From
the second equation of (\ref{T3.3}), we also have $f_3(x^1, x^2)=\frac{\displaystyle E(x^1)}{\displaystyle x^2-H(x^1)}$, so $E(x^1)P(x^2)=-1$ for any $(x^1, x^2)\in J_1\times I_2$. It follows that $E$ is constant on $J_1$; hence,
$f_3(x^1, x^2)=\frac{\displaystyle k_3}{\displaystyle x^2-H(x^1)}$, with $k_3\neq 0$, and $\frac{\displaystyle \partial f_3}{\displaystyle \partial x^2}(x^1, x^2)=\frac{\displaystyle -k_3}{\displaystyle (x^2-H(x^1))^2}$.

If there exists $x_1^1\in I_1$ a boundary point of $J_1$, then
$\lim_{x^1\rightarrow x_1^1,\ x^1\in J_1}\frac{\displaystyle \partial f_3}{\displaystyle \partial x^2}(x^1, x^2)=0$, so $\lim_{x^1\rightarrow x_1^1,\ x^1\in J_1}| x^2-H(x^1)|=\infty$, and $f_3(x_1^1, x^2)=\lim_{x^1\rightarrow x_1^1,\ x^1\in J_1}f_3(x^1, x^2)=0$, contradiction. We conclude that $J_1=I_1$, so
$\frac{\displaystyle \partial f_3}{\displaystyle \partial x^2}(x^1, x^2)\neq 0$ everywhere on $I_1\times I_2$, and
$$f_3(x^1, x^2)=\frac{\displaystyle k_3}{\displaystyle x^2-H(x^1)}\ \textrm{ for any }(x^1, x^2)\in I_1\times I_2,$$
with $k_3\neq 0$, $H(x^1)\notin I_2$ for any $x^1\in I_1$.

\vspace{5pt}
Subcase I.1: There exists $(x_1^1, x_1^2)\in I_1\times I_2$ such that $\frac{\displaystyle \partial f_3}{\displaystyle \partial x^1}(x_1^1, x_1^2)\neq \nolinebreak 0$. Then, $\frac{\displaystyle \partial f_3}{\displaystyle \partial x^1}(x^1, x^2)\neq 0$, hence $l_1(x^1, x^2)\neq 0$, everywhere on a maximal open interval $I_1\times J_2$. From the first equation of (\ref{T3.3}), we have
$l_1(x^1, x^2)=\frac{\displaystyle -1}{\displaystyle x^1-F(x^2)}$ and
$f_3(x^1, x^2)=\frac{\displaystyle G(x^2)}{\displaystyle x^1-F(x^2)}$  for any $(x^1, x^2)\in I_1\times J_2$, with $G(x^2)\neq 0$ and $F(x^2)\notin I_1$ for any $x^2\in J_2$. Hence, $k_3(x^1-F(x^2))=G(x^2)(x^2-H(x^1))$, and we get $-G(x^2)H'(x^1)=k_3\neq 0$ on $I_1\times J_2$, from which, $H'$ is constant on $I_1$, and $G$ is constant on $J_2$. Denoting $H'(x^1)=:c_1\neq 0$ on $I_1$, we obtain $G(x^2)=\frac{\displaystyle -k_3}{\displaystyle c_1}$ on $J_2$ and $H(x^1)=c_1x^1+c_2$ on $I_1$, with $c_2\in \mathbb R$. We get
$F(x^2)=\frac{\displaystyle x^2-c_2}{\displaystyle c_1}$ and
$f_3(x^1, x^2)=\frac{\displaystyle k_3}{\displaystyle x^2-c_1x^1-c_2}$ for any $(x^1, x^2)\in I_1\times J_2$, with $c_1x^1+c_2\notin I_2$ for any $x^1\in I_1$. Since $\frac{\displaystyle\partial f_3}{\displaystyle\partial x^1}(x^1, x^2)=\frac{\displaystyle c_1k_3}{\displaystyle (x^2-c_1x^1-c_2)^2}$, it follows that $J_2=I_2$; hence, $\frac{\displaystyle \partial f_3}{\displaystyle \partial x^1}(x^1, x^2)\neq 0$ everywhere on $I_1\times I_2$, and
$$f_3(x^1, x^2)=\frac{\displaystyle k_3}{\displaystyle x^2-c_1x^1-c_2}\ \textrm{ for any }(x^1, x^2)\in I_1\times I_2,$$
with $c_1, k_3\neq 0, c_2\in \mathbb R$ such that $x^2-c_1x^1-c_2\neq 0$ for any $(x^1, x^2)\in I_1\times I_2$, formula of $f_3$ that satisfies (\ref{T3.3}).

\vspace{5pt}
Subcase I.2: $\frac{\displaystyle \partial f_3}{\displaystyle \partial x^1}=0$ on $I_1\times I_2$. Since $f_3(x^1, x^2)=\frac{\displaystyle k_3}{\displaystyle x^2-H(x^1)}$, the condition is equivalent to $H$ is constant on $I_1$, and we get
$f_3(x^1, x^2)=\frac{\displaystyle k_3}{\displaystyle x^2-c_1}$ for any $(x^1, x^2)\in I_1\times I_2$, with $c_1\notin I_2$, formula that satisfies (\ref{T3.3}).

\vspace{5pt}
Case II: $\frac{\displaystyle \partial f_3}{\displaystyle \partial x^2}=0$ on $I_1\times I_2$, so $f_3=f_3(x^1)$ on $I_1\times I_2$.

\vspace{5pt}
Subcase II.1: $\frac{\displaystyle \partial f_3}{\displaystyle \partial x^1}(x_0^1, x_0^2)\neq 0$ for some $(x_0^1, x_0^2)\in I_1\times I_2$. Then,
$\frac{\displaystyle \partial f_3}{\displaystyle \partial x^1}(x^1, x^2)\neq 0$ everywhere on a maximal open interval $I_1\times J_2$. We have
$\frac{\displaystyle 1}{\displaystyle f_3}\cdot\frac{\displaystyle \partial f_3}{\displaystyle \partial x^1}(x^1, x^2)=\frac{\displaystyle -1}{\displaystyle x^1-F(x^2)}$ on $I_1\times J_2$, with $F(x^2)\notin I_1$ for any $x^2\in J_2$. But $f_3=f_3(x^1)$, which implies that $F$ is constant on $J_2$, $F(x^2)=c_1\notin I_1$, so $f_3(x^1, x^2)=\frac{\displaystyle G(x^2)}{\displaystyle x^1-c_1}$ on $I_1\times J_2$. Since $f_3=f_3(x^1)$, we get $G$ constant on $J_2$, so $f_3(x^1, x^2)=\frac{\displaystyle k_3}{\displaystyle x^1-c_1}$ on $I_1\times J_2$, with $k_3\neq 0$. From $\frac{\displaystyle \partial f_3}{\displaystyle \partial x^1}(x^1, x^2)=\nolinebreak\frac{\displaystyle -k_3}{\displaystyle (x^1-c_1)^2}$, we get $J_2=I_2$. We obtain $f_3(x^1, x^2)=\frac{\displaystyle k_3}{\displaystyle x^1-c_1}$ for any $(x^1, x^2)\in I_1\times I_2$, which satisfies (\ref{T3.3}).

\vspace{5pt}
Subcase II.2: $\frac{\displaystyle \partial f_3}{\displaystyle \partial x^1}=0$ on $I_1\times I_2$. Then $f_3=f_3(x^2)$, so $f_3$ is constant on $I_1\times I_2$, which satisfies (\ref{T3.3}).

\vspace{5pt}
We notice that all the obtained solutions $f_3$ can be expressed by the formula
$$f_3(x^1,x^2)=\frac{\displaystyle 1}{\displaystyle c_1x^1+c_2x^2+c_3},$$
with $c_1, c_2, c_3\in \mathbb R$ such that $c_1x^1+c_2x^2+c_3\neq 0$ for any $(x^1,x^2)\in I_1\times I_2$.
\end{proof}

\begin{example}
The warped product manifold
$$\Big((0, \infty)\times (0, \infty)\Big)\times_{f}\mathbb R=\Big((0, \infty)\times (0, \infty)\times \mathbb R,\ g=(dx^1)^2+(dx^2)^2+f^2(dx^3)^2\Big),$$ for
$$f(x^1,x^2)=x^1+x^2+m,$$
with $m\geq 0$, is a flat Riemannian manifold.
\end{example}

And we can further deduce
\begin{corollary}\label{corT3.8x}
There do not exist proper flat warped product manifolds of the form
$$\mathbb R^2\times_{f}\mathbb R=\Big(\mathbb R^3,\ g=(dx^1)^2+(dx^2)^2+f^2(dx^3)^2\Big).$$
\end{corollary}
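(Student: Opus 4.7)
The plan is to invoke Theorem \ref{ps11mac7} directly, after matching conventions. First, I would rewrite the given warped product metric $g=(dx^1)^2+(dx^2)^2+f^2(dx^3)^2$ in the diagonal form \eqref{f6} by choosing $f_1=f_2\equiv 1$ and $f_3=1/f$; since $f$ is smooth and nowhere zero on $\mathbb{R}^2$, so is $f_3$ on $\mathbb{R}^3$, and $f_3$ depends only on $(x^1,x^2)$. This is exactly the setup of Theorem \ref{ps11mac7} with $I_1=I_2=I_3=\mathbb{R}$.

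Applying the theorem, $(\mathbb{R}^3,g)$ is flat if and only if
$$f_3(x^1,x^2)=\frac{1}{c_1 x^1 + c_2 x^2 + c_3}$$
for some $c_1,c_2,c_3\in\mathbb{R}$ with $c_1 x^1 + c_2 x^2 + c_3\neq 0$ for every $(x^1,x^2)\in\mathbb{R}^2$. Translating back to the warping function, this is equivalent to
$$f(x^1,x^2)=c_1 x^1 + c_2 x^2 + c_3 \quad\text{on }\mathbb{R}^2,$$
together with the same nonvanishing condition.

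The closing observation is elementary: any affine function $c_1 x^1 + c_2 x^2 + c_3$ on $\mathbb{R}^2$ with $(c_1,c_2)\neq(0,0)$ is surjective onto $\mathbb{R}$ and in particular vanishes along an entire line. Hence the nonvanishing requirement on all of $\mathbb{R}^2$ forces $c_1=c_2=0$, whence $f\equiv c_3$ is constant, contradicting the properness hypothesis. Therefore no proper flat warped product of the prescribed form exists.

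There is no real technical obstacle here; the only subtlety is the convention swap between the warping function $f$ and the coefficient $f_3$ of \eqref{f6}, and the key use of the hypothesis is the globality of the domain: it is precisely the fact that $\mathbb{R}^2$ is not contained in any open half-plane avoiding the zero set of a nontrivial affine form that collapses the three-parameter family of flat solutions to the constant case. On a proper subdomain $I_1\times I_2\subsetneq\mathbb{R}^2$, by contrast, the preceding example shows that genuinely nonconstant solutions do exist.
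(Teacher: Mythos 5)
Your proposal is correct and follows exactly the paper's (implicit) route: the paper deduces Corollary \ref{corT3.8x} directly from Theorem \ref{ps11mac7}, and your argument simply fills in that deduction, identifying $f_3=1/f$, reading off that flatness forces $f=c_1x^1+c_2x^2+c_3$ nonvanishing on $\mathbb{R}^2$, and observing that a nonvanishing affine function on all of $\mathbb{R}^2$ must be constant, contradicting properness. Your closing remark correctly pinpoints why the globality of $\mathbb{R}^2$ is the decisive hypothesis, consistent with the nonconstant example on a proper subdomain given just before the corollary.
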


\begin{theorem}\label{ps11mac6}
If $f_1=1$, $f_2=f_2(x^1)$, $f_3=f_3(x^1)$, then the biwarped product manifold
$$I_1\times_{\frac{1}{f_2}} I_2\times_{\frac{1}{f_3}} I_3=(I,g)$$
is a flat Riemannian
manifold if and only if
one of the following assertions holds:

(1) $f_2=k_2\in \mathbb R\setminus \{0\}$ and $f_3=k_3\in \mathbb R\setminus \{0\}$;

(2) $f_2=k_2\in \mathbb R\setminus \{0\}$ and $f_3(x^1)=\frac{\displaystyle k_3}{\displaystyle x^1-c_3}$, with $k_3\in \mathbb R\setminus \{0\}$, $c_3\in \mathbb R\setminus I_1$;

(3) $f_2(x^1)=\frac{\displaystyle k_2}{\displaystyle x^1-c_2}$ and $f_3=k_3\in \mathbb R\setminus \{0\}$, with $k_2\in \mathbb R\setminus \{0\}$, $c_2\in \mathbb R\setminus I_1$.

In the first case from above, the manifold is just a direct product, and in
the last two cases, the manifold reduces to a warped product manifold.
\end{theorem}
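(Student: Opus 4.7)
The plan is to reduce this statement directly to Theorem \ref{ps1b}, since the hypothesis $f_1 = 1$, $f_2 = f_2(x^1)$, $f_3 = f_3(x^1)$ is the special case of the hypothesis of Theorem \ref{ps1b} (namely, $f_i = f_i(x^1)$ for all $i \in \{1,2,3\}$) in which the first coefficient function is identically one.

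First, I would observe that, with $f_1 = 1$, the function $F(x^1) = x^1$ is an antiderivative of $\frac{1}{f_1} = 1$, so that the image set $\{F(x^1) : x^1 \in I_1\}$ coincides with $I_1$. Consequently, the condition ``$c_j \in \mathbb{R} \setminus \{F(x^1) : x^1 \in I_1\}$'' appearing in Theorem \ref{ps1b} translates, in the present setting, to ``$c_j \in \mathbb{R} \setminus I_1$,'' which matches the parameter range in the statement.

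Next, I would substitute $F(x^1) = x^1$ into each of the three cases from the conclusion of Theorem \ref{ps1b}: case (1) there gives $f_2, f_3$ both constant, matching case (1) here; case (2) there gives $f_2$ constant and $f_3 = \frac{k_3}{x^1 - c_3}$, matching case (2) here; and case (3) there gives $f_3$ constant and $f_2 = \frac{k_2}{x^1 - c_2}$, matching case (3) here. Since Theorem \ref{ps1b} gives an ``if and only if'' characterization, this completes the characterization part of the proof.

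For the concluding remark: in case (1), the metric reads $g = (dx^1)^2 + \frac{1}{k_2^2}(dx^2)^2 + \frac{1}{k_3^2}(dx^3)^2$, which, after a trivial rescaling of $x^2$ and $x^3$, is the product metric on $I_1 \times I_2 \times I_3$, i.e., a direct product. In cases (2) and (3), exactly one of the warping functions $\frac{1}{f_2}, \frac{1}{f_3}$ is constant, so the biwarped product structure degenerates: the factor corresponding to the constant warping function splits off as a direct factor, and the remaining structure is a warped product of two manifolds. There is no genuine obstacle here; the only point that requires a moment of care is keeping track of the parameter ranges in the translation from the general statement of Theorem \ref{ps1b} to the present one, which is straightforward since $F$ becomes the identity.
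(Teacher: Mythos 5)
Your proposal is correct, but it takes a different route from the paper. The paper proves Theorem \ref{ps11mac6} from scratch: it computes $a_{21}=\frac{f_2'}{f_2}$, $a_{31}=\frac{f_3'}{f_3}$ (all other $a_{ij}=0$), writes out the Ricci components, and reduces flatness to the system $h_2'=h_2^2$, $h_3'=h_3^2$, $h_2h_3=0$, which it then solves directly to obtain the three cases. You instead observe that the hypothesis is the special case $f_1\equiv 1$ of Theorem \ref{ps1b} and specialize its conclusion, using that $F(x^1)=x^1$ is an antiderivative of $\frac{1}{f_1}=1$, so that $\{F(x^1):x^1\in I_1\}=I_1$ and the parameter condition $c_j\in\mathbb R\setminus F(I_1)$ becomes $c_j\in\mathbb R\setminus I_1$; since an antiderivative is only determined up to an additive constant, which is absorbed by shifting $c_j$ over its full admissible range, the resulting family of solutions is the same, and the biconditional transfers intact. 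This is a legitimate and more economical argument --- in fact the paper itself records essentially this specialization as Corollary \ref{ps1c} (the case $h_1=0$, i.e.\ $f_1$ constant), so your reduction could even be shortened to citing that corollary with $k_1=1$. What your route buys is brevity and the elimination of a redundant Ricci computation; what the paper's direct proof buys is a self-contained application section that exhibits the explicit curvature system for the biwarped ansatz, which it then reuses as a template for the harder sequential and doubly warped cases. Your treatment of the final remark (rescaling $x^2$, $x^3$ in case (1) to get a direct product; the factor with constant warping splitting off in cases (2) and (3), leaving a genuine warped product) is also fine and matches what the paper asserts without proof.
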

\begin{proof}
We have: $a_{21}=\frac{\displaystyle f_2'}{\displaystyle f_2}$, $a_{31}=\frac{\displaystyle f_3'}{\displaystyle f_3}$, $a_{12}=a_{13}=a_{23}=a_{32}=0$, and we get:
\begin{align*}
\R(E_1,E_1)&=E_1(a_{21})+E_1(a_{31})-a_{21}^2-a_{31}^2,\\
\R(E_2,E_2)&=E_1(a_{21})-a_{21}^2-a_{21}a_{31},\\
\R(E_3,E_3)&=E_1(a_{31})-a_{31}^2-a_{21}a_{31},\\
\R(E_1,E_2)&=\R(E_1,E_3)=\R(E_2,E_3)=0.
\end{align*}

Then, $\R=0$ if and only if
\begin{equation*}
\left\{
    \begin{aligned}
&\left(\frac{\displaystyle f_2'}{\displaystyle  f_2}\right)'=\left(\frac{\displaystyle f_2'}{\displaystyle  f_2}\right)^2\\
&\left(\frac{\displaystyle f_3'}{\displaystyle  f_3}\right)'=\left(\frac{\displaystyle f_3'}{\displaystyle  f_3}\right)^2\\
& f_2'f_3'=0
     \end{aligned}
  \right.,
\end{equation*}
which gives
\begin{equation*}
\left\{
    \begin{aligned}
&h_2'=h_2^2\\
&h_3'=h_3^2\\
&h_2h_3=0
    \end{aligned}
  \right.\ .
\end{equation*}

The first equation is equivalent to $h_2(x^1)=\displaystyle\frac{-1}{x^1-c_2}$, with $c_2\notin I_1$, or $h_2=0$ on $I_1$.

The second equation is equivalent to $h_3(x^1)=\displaystyle\frac{-1}{x^1-c_3}$, with $c_3\notin I_1$, or $h_3=0$ on $I_1$.

Due to the third equation, we obtain $h_2=0$ or $h_3=0$. So, we have the cases:

I. $h_2=0$ and $h_3=0$, that is, $f_2=k_2$ and $f_3=k_3$, with $k_2, k_3\neq 0$;

II. $h_2=0$ and $h_3(x^1)=\displaystyle\frac{-1}{x^1-c_3}$, with $c_3\notin I_1$; that is, $f_2=k_2$ and $f_3(x^1)=\displaystyle\frac{k_3}{x^1-c_3}$, with $k_2, k_3\neq 0, c_3\notin I_1$;

III. $h_2(x^1)=\displaystyle\frac{-1}{x^1-c_2}$, with $c_2\notin I_1$, and $h_3=0$; that is, $f_2(x^1)=\displaystyle\frac{k_2}{x^1-c_2}$ and $f_3=k_3$, with $k_2, k_3\neq 0, c_2\notin I_1$.
\end{proof}

We can further deduce
\begin{corollary}\label{corT3.8w}
There do not exist proper flat biwarped product manifolds of the form
$$I_1\times_{f} I_2\times_{h} I_3=\Big(I,\ g=(dx^1)^2+f^2(dx^2)^2+h^2(dx^3)^2\Big).$$
\end{corollary}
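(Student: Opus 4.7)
The plan is to derive this nonexistence result as an immediate consequence of Theorem \ref{ps11mac6}. First I would translate the biwarped product $I_1\times_f I_2\times_h I_3$ with metric $g=(dx^1)^2+f^2(dx^2)^2+h^2(dx^3)^2$ into the language of the diagonal metric \eqref{f6}: setting $f_1=1$, $f_2=\frac{1}{f}$, $f_3=\frac{1}{h}$, with $f=f(x^1)$ and $h=h(x^1)$ nowhere zero, places us exactly in the hypothesis of Theorem \ref{ps11mac6}. Under this identification, the function $f_2$ is constant if and only if $f$ is constant, and likewise for $f_3$ and $h$; hence the manifold is \emph{proper} precisely when both $f_2$ and $f_3$ are nonconstant.

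Next I would invoke Theorem \ref{ps11mac6}. It asserts that flatness of $(I,g)$ holds in exactly three cases, and a brief inspection of those cases shows that in each of them at least one of $f_2$, $f_3$ must be a nonzero constant: case (1) forces both to be constant, case (2) forces $f_2$ to be constant, and case (3) forces $f_3$ to be constant. Consequently, in every flat configuration at least one of the warping functions $f$, $h$ is constant.

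Therefore, if we demand that both $f$ and $h$ are nonconstant, then none of the three admissible flatness conditions of Theorem \ref{ps11mac6} can hold, so $(I,g)$ cannot be flat. This contradiction proves the corollary. There is no substantive obstacle here beyond the bookkeeping of the reciprocal relationship between the coefficient functions $f_i$ in \eqref{f6} and the warping functions $f,h$ appearing in the biwarped product; once this correspondence is stated, the result is purely a reading of the classification already established.
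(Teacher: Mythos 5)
Your proposal is correct and takes essentially the same route as the paper: the corollary is stated there as an immediate consequence of Theorem \ref{ps11mac6}, and your translation $f_1=1$, $f_2=\frac{1}{f}$, $f_3=\frac{1}{h}$ together with the observation that each of the three flat cases forces at least one of $f_2$, $f_3$ (hence one of $f$, $h$) to be constant is precisely the deduction the paper leaves implicit. No gaps.
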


\begin{theorem}\label{ps11mv}
If $f_1=1$, $f_2=f_2(x^1)$, $f_3=f_3(x^1,x^2)$, then the sequential warped product manifold
$$\left(I_1\times_{\frac{1}{f_2}}I_2\right)\times_{\frac{1}{f_3}}I_3=(I,g)$$
is a flat Riemannian manifold if and only if one of the following assertions holds:

(1) $f_2=k_2\in \mathbb R\setminus \{0\}$ and $f_3=k_3\in \mathbb R\setminus \{0\}$;

(2) $f_2=k_2\in \mathbb R\setminus \{0\}$ and $f_3(x^1)=\frac{\displaystyle k_3}{\displaystyle x^1-c_3}$, with $k_3 \in \mathbb R\setminus \{0\}$, $c_3\in \mathbb{R} \setminus I_1$;

(3) $f_2=k_2\in \mathbb R\setminus \{0\}$ and $f_3(x^2)=\frac{\displaystyle k_3}{\displaystyle x^2-c_3}$, with $k_3 \in \mathbb R\setminus \{0\}$, $c_3\in \mathbb{R} \setminus I_2$;

(4) $f_2=k_2\in \mathbb R\setminus \{0\}$ and $f_3(x^1,x^2)=\frac{\displaystyle k_3}{\displaystyle c_1 x^1+x^2+c_2}$, with $k_3$, $c_1\in\nolinebreak \mathbb R\setminus\nolinebreak \{0\}$, $c_2\in \mathbb R$ such that $c_1x^1+x^2+c_2\neq 0$ for any $(x^1,x^2)\in I_1\times I_2$;

(5) $f_2(x^1)=\frac{\displaystyle k_2}{\displaystyle x^1-c_2}$ and $f_3=k_3\in \mathbb R\setminus \{0\}$, with $k_2\in \mathbb R\setminus \{0\}$, $c_2\in \mathbb R\setminus I_1$;

(6) $f_2(x^1)=\frac{\displaystyle k_2}{\displaystyle x^1-c_2}$ and
$f_3(x^1,x^2)=\frac{\displaystyle k_3}{\displaystyle (x^1-c_2)\cos \left(\frac{x^2}{k_2}-c_3\right)}$, with $k_2, k_3 \in \mathbb R\setminus \{0\}$, $c_2\in \mathbb R\setminus I_1$, $c_3 \in \mathbb R$ such that $\left|\displaystyle\frac{x^2}{k_2}-c_3\right|<\displaystyle\frac{\pi}{2}$ for any $x^2 \in I_2$;

(7) $f_2(x^1)=\frac{\displaystyle k_2}{\displaystyle x^1-c_2}$ and
$f_3(x^1,x^2)=\frac{\displaystyle k_3}{\displaystyle 1+c_1(x^1-c_2)\cos \left(\frac{x^2}{k_2}-c_3\right)}$, with $c_1$, $k_2$, $k_3 \in \mathbb R\setminus \{0\}$, $c_2\in \mathbb R\setminus I_1$, $c_3 \in \mathbb R$ such that
$$\displaystyle 1+c_1(x^1-c_2)\cos \left(\frac{x^2}{k_2}-c_3\right)\neq 0\ \textrm{ for any }(x^1, x^2) \in I_1\times I_2.$$

In the first case from above, the manifold is just a direct product, and in the next four cases, the manifold reduces to a warped product manifold.
\end{theorem}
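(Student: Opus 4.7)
The plan is as follows. Under the hypotheses, direct computation gives $a_{12}=a_{13}=a_{23}=0$ while $a_{21}=h_2:=f_2'/f_2$, $a_{31}=(\partial_{x^1}f_3)/f_3$, and $a_{32}=f_2(\partial_{x^2}f_3)/f_3$. Substituting into the Ricci formulas of Section~2 and using that in dimension three Ricci-flatness is equivalent to flatness, the problem reduces to four equations: the three diagonal identities $\R(E_i,E_i)=0$ and the single off-diagonal $\R(E_1,E_2)=0$, the remaining off-diagonals vanishing automatically. The coupling equation is $\partial_{x^1}a_{32}=a_{31}a_{32}$.

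I would split into two exhaustive branches. First branch: $\partial f_3/\partial x^2\equiv 0$, so $f_3=f_3(x^1)$ and $a_{32}=0$. The system reduces to three ODEs in $h_2$ and $h_3:=f_3'/f_3$ whose suitable linear combination yields $h_2h_3=0$; the Riccati analysis performed in Corollary~\ref{ps1c} then delivers exactly assertions (1), (2), and (5).

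The substantive branch is $\partial f_3/\partial x^2\not\equiv 0$. On any open set where that derivative is nowhere zero, writing $\phi:=\ln|f_3|$ turns $\R(E_1,E_2)=0$ into $\partial_{x^1}\ln(f_2|\phi_{x^2}|)=\phi_{x^1}$; integrating successively in $x^1$ and $x^2$ produces the separable representation
\[
\frac{1}{f_3(x^1,x^2)}=\alpha(x^1)+\frac{\beta(x^2)}{f_2(x^1)},\qquad \beta'\not\equiv 0.
\]
Setting $F:=1/f_3$ and substituting into $\R(E_2,E_2)=0$ yields an equation of the form $A(x^1)+B(x^1)\beta(x^2)+C(x^1)\beta''(x^2)=0$ with $C=-f_2$; since $\beta$ is non-constant, a standard separation-of-variables argument produces constants $\lambda,\mu$ such that $\beta''-\lambda\beta+\mu=0$, $h_2'-2h_2^2=\lambda f_2^2$, and $(h_2'-h_2^2)\alpha+h_2\alpha'=-\mu f_2$. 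The decisive step is then $\R(E_1,E_1)=0$, which rearranges to $F_{x^1x^1}=(h_2'-h_2^2)F$; computing $F_{x^1x^1}$ in the separated form forces $h_2'=h_2^2$, whence either $f_2$ is a nonzero constant or $f_2(x^1)=k_2/(x^1-c_2)$ with $c_2\notin I_1$.

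Finally, I would treat these two sub-cases in turn. When $f_2$ is constant, one gets $\lambda=\mu=0$, both $\alpha$ and $\beta$ are affine, so $1/f_3$ is an affine function of $(x^1,x^2)$ with nonzero $x^2$-coefficient, recovering assertions (3) (when the $x^1$-coefficient vanishes) and (4). When $f_2=k_2/(x^1-c_2)$, one has $\lambda=-1/k_2^2$, so $\beta$ is a cosine of period $2\pi k_2$ plus an additive constant, and $\alpha$ is affine with slope $\mu k_2$; expanding $F=\alpha+\beta(x^1-c_2)/k_2$ triggers the cancellation of the $\mu k_2 x^1$ term and leaves
\[
\frac{1}{f_3}=D+\frac{R}{k_2}(x^1-c_2)\cos\!\Bigl(\frac{x^2}{k_2}-c_3\Bigr)
\]
for constants $D,R,c_3$ with $R\neq 0$. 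Normalizing produces assertion (6) when $D=0$ and assertion (7) when $D\neq 0$. The main hurdle is the bookkeeping in this last sub-case, particularly the cancellation that collapses two apparent degrees of freedom into the correct two cosine families without introducing extraneous solutions; the remaining steps are routine ODE integrations of the kind used throughout Section~2.
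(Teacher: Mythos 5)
Your algebraic skeleton is sound, and it genuinely reorganizes the paper's argument. Where the paper first reduces $\R=0$ to the pointwise system (\ref{T3.8}) by taking linear combinations of the three diagonal Ricci equations and then runs a case analysis on the vanishing sets of $a_{21}$, $a_{31}$, $a_{32}$, you integrate the off-diagonal equation $\partial_{x^1}a_{32}=a_{31}a_{32}$ first to obtain the separable ansatz $1/f_3=\alpha(x^1)+\beta(x^2)/f_2(x^1)$, and only then separate variables. I checked the computations: with $F=1/f_3$, $\R(E_2,E_2)=0$ is $(h_2'-h_2^2)F+h_2F_{x^1}=f_2^2F_{x^2x^2}$, which under your ansatz is exactly $A+B\beta+C\beta''=0$ with $C=-f_2$ and $B=(h_2'-2h_2^2)/f_2$; your constants $\lambda,\mu$ come out right; $\R(E_1,E_1)=0$ gives $\alpha''-(h_2'-h_2^2)\alpha=2\frac{h_2'-h_2^2}{f_2}\beta$, which for nonconstant $\beta$ forces $h_2'=h_2^2$ \emph{and} $\alpha''=0$; in the sub-case $f_2(x^1)=k_2/(x^1-c_2)$ one indeed gets $\lambda=-1/k_2^2$, $\alpha$ affine with slope $\mu k_2$, and the advertised cancellation $\alpha+\beta/f_2=D+\frac{R}{k_2}(x^1-c_2)\cos(x^2/k_2-c_3)$, recovering families (6) ($D=0$) and (7) ($D\neq0$); the constant-$f_2$ sub-case gives $\lambda=\mu=0$ and the affine families (3), (4); your first branch gives (1), (2), (5). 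Two remarks: your ``decisive step'' is dispensable, since subtracting $\R(E_2,E_2)=0$ from $\R(E_3,E_3)=0$ gives $h_2'-h_2^2=\partial_{x^1}a_{31}-a_{31}^2$, which with $\R(E_1,E_1)=0$ yields $h_2'=h_2^2$ immediately (this is precisely how the paper arrives at (\ref{T3.8})); and you never invoke $\R(E_3,E_3)=0$ at all --- it does turn out to be automatic once $h_2'=h_2^2$ is in hand, but that deserves a line, since your three chosen equations do not imply it by themselves.

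The genuine gap is the local-to-global passage. Your dichotomy is ``$\partial f_3/\partial x^2\equiv0$'' versus analysis ``on any open set where that derivative is nowhere zero,'' but the theorem asserts formulas valid on all of $I_1\times I_2$, and the zero set of $\partial_{x^2}f_3$ can be a nonempty proper subset even for the listed solutions: in family (7) it consists of the lines $x^2\in k_2(c_3+\pi\mathbb Z)$ that meet $I_2$. So you still must (a) show each component of $\{\partial_{x^2}f_3\neq0\}$ has product form $I_1\times J_2$ (this follows because $a_{32}$ solves a linear ODE in $x^1$, so its zero set is a union of full $x^1$-lines --- a structural fact the paper states explicitly and that your ``integrating successively in $x^1$ and $x^2$'' silently requires); (b) rule out nontrivial strips on which $\partial_{x^2}f_3\equiv0$ abutting nontrivial components, and prove the constants $(k_2,c_2,c_3,R,D)$ agree across the isolated separating lines --- the paper does this with boundary-limit arguments ($f_3$ would vanish at a boundary point, contradiction) and a mixed-partial computation ($\partial^2f_3/\partial x^1\partial x^2\neq0$ at the separating point, forcing the strip to be a single line and the same formula on both sides); and (c) extract the exact global constraint distinguishing (6) from (7): in (6), $a_{31}=-1/(x^1-c_2)$ is nowhere zero, so the cosine argument must stay in one component of $\{\cos\neq0\}$, normalizable to $(-\pi/2,\pi/2)$, whereas in (7) the formula extends across the cosine's zeros. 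This gluing occupies the bulk of the paper's proof; calling it ``bookkeeping'' understates it, because without it you have not excluded hybrid solutions that agree with different families on different strips of $I_1\times I_2$.
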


\begin{proof}
We have: $a_{21}=\frac{\displaystyle f_2'}{\displaystyle f_2}$, $a_{31}=\frac{\displaystyle 1}{\displaystyle f_3}\cdot\frac{\displaystyle \partial f_3}{\displaystyle \partial x^1}$, $a_{32}=\frac{\displaystyle f_2}{\displaystyle f_3}\cdot\frac{\displaystyle \partial f_3}{\displaystyle \partial x^2}$, and $a_{12}=a_{13}=a_{23}=0$, and we get:
\begin{align*}
\R(E_1,E_1)&=E_1(a_{21})+E_1(a_{31})-a_{21}^2-a_{31}^2,\\
\R(E_2,E_2)&=E_1(a_{21})+E_2(a_{32})-a_{21}^2-a_{32}^2-a_{21}a_{31},\\
\R(E_3,E_3)&=E_1(a_{31})+E_2(a_{32})-a_{31}^2-a_{32}^2-a_{21}a_{31},\\
\R(E_1,E_2)&=E_1(a_{32})-a_{31}a_{32},\\
\R(E_1,E_3)&=\R(E_2,E_3)=0.
\end{align*}

Then, $\R=0$ if and only if
\begin{equation*}
\left\{
    \begin{aligned}
&      E_1(a_{21})-a_{21}^2=-\left(E_1(a_{31})-a_{31}^2\right)\\
&E_1(a_{21})-a_{21}^2=-\left(E_2(a_{32})-a_{32}^2\right)+a_{21}a_{31}\\
&E_1(a_{31})-a_{31}^2=-\left(E_2(a_{32})-a_{32}^2\right)+a_{21}a_{31}\\
&E_1(a_{32})=a_{31}a_{32}
    \end{aligned}
  \right. ,
\end{equation*}
which is equivalent to
\begin{equation}
\label{T3.8}
\left\{
\begin{aligned}
&a_{21}'=a_{21}^2\\
\vspace{0.1cm}
&\frac{\displaystyle \partial a_{31}}{\displaystyle \partial x^1}=a_{31}^2\\
\vspace{0.1cm}
&f_2\frac{\displaystyle \partial a_{32}}{\displaystyle \partial x^2}-a_{32}^2=a_{21}a_{31}\\
&\frac{\displaystyle \partial a_{32}}{\displaystyle \partial x^1}=a_{31}a_{32}
\end{aligned}
\right. .
\end{equation}

All the functions that appear in the sequel are considered to be smooth and arbitrary unless otherwise specified.

We will start by analysing the equations of the system.

1. The first equation has the solutions:\\
a) $a_{21}=0$ on $I_1$, which corresponds to $f_2$ constant on $I_1$;\\
b) $a_{21}(x^1)=\frac{\displaystyle -1}{\displaystyle x^1-c_2}$ for any $x^1\in I_1$, which corresponds to $f_2(x^1)=\frac{\displaystyle k_2}{\displaystyle x^1-c_2}$ for any $x^1\in I_1$, with $k_2\neq 0$, $c_2\notin I_1$.

2. For the second equation:\\
a) Any maximal connected set on which $a_{31}(x^1, x^2)\neq 0$ everywhere is of the form $I_1\times J_2$, where $J_2$ is an open subinterval in $I_2$. In such a case, we have $a_{31}(x^1, x^2)=\frac{\displaystyle -1}{\displaystyle x^1-F(x^2)}$ and
$f_3(x^1, x^2)=\frac{\displaystyle G(x^2)}{\displaystyle x^1-F(x^2)}$ for any $(x^1, x^2)\in\nolinebreak I_1\times\nolinebreak J_2$, with $G(x^2)\neq 0$ and $F(x^2)\notin I_1$ for any $x^2\in J_2$.\\
b) Any maximal connected set on which $a_{31}=0$ is of the form $I_1\times K_2$, where $K_2$ is a subinterval in $I_2$ (possibly even trivial), closed with respect to $I_2$. In such a case, we have $\frac{\displaystyle \partial f_3}{\displaystyle \partial x^1}(x^1, x^2)=0$ for any $(x^1, x^2)\in I_1\times K_2$.

3. For the fourth equation:\\
a) Any maximal connected set on which $a_{32}(x^1, x^2)\neq 0$ everywhere is of the form $I_1\times J_2^2$, where $J_2^2$ is an open subinterval in $I_2$. In such a case, we have $a_{32}(x^1, x^2)=G_2(x^2)f_3(x^1, x^2)$ and $f_3(x^1, x^2)=\frac{\displaystyle f_2(x^1)}{\displaystyle M(x^1)-G_3(x^2)}$ for any $(x^1, x^2)\in I_1\times J_2^2$, where $G_3$ is an antiderivative of $G_2$, $G_2(x^2)\neq 0$, $M(x^1)-G_3(x^2)\neq 0$ for any $x^1\in I_1$, $x^2\in J_2^2$.\\
b) Any maximal connected set on which $a_{32}=0$ is of the form $I_1\times K_2^2$, where $K_2^2$ is a subinterval in $I_2$ (possibly even trivial), closed with respect to $I_2$. In such a case, we have $\frac{\displaystyle \partial f_3}{\displaystyle \partial x^2}(x^1, x^2)=0$ for any $(x^1, x^2)\in I_1\times K_2^2$, which implies $f_3=f_3(x^1)$ on $I_1\times K_2^2$.

In fact, because $\frac{\displaystyle a_{32}(x^1, x^2)}{\displaystyle f_3(x^1, x^2)}=\left\{
\begin{aligned}
&G_2(x^2)\neq 0 \hspace*{-8 pt}&,\ & x^2\in J_2^2\\
\vspace{0.1cm}
&0 &,\ & x^2\in K_2^2
\end{aligned}
\right.\ $, we get $\frac{\displaystyle \partial}{\displaystyle \partial x^1}\left (\frac{\displaystyle a_{32}}{\displaystyle f_3}\right )=0$ on $I_1\times\nolinebreak I_2$, i.e., $\frac{\displaystyle a_{32}}{\displaystyle f_3}=\frac{\displaystyle a_{32}}{\displaystyle f_3}(x^2)$. We define the function $\bar{G}_2:I_2\rightarrow \mathbb R$, $\bar{G}_2(x^2)=\frac{\displaystyle a_{32}(x^1, x^2)}{\displaystyle f_3(x^1, x^2)}$, $x^2\in I_2$, for an arbitrary $x^1\in I_1$. The function is well defined, smooth, and $\bar{G}_2(x^2)$ is zero on the sets $K_2^2$ and nonzero on the intervals $J_2^2$. Let $G_3$ be an antiderivative of $\bar{G}_2$ on $I_2$. We notice that $G_3$ is constant on any set $K_2^2$ and
$\frac{\displaystyle \partial}{\displaystyle \partial x^2}\left (G_3(x^2)+\frac{\displaystyle f_2(x^1)}{\displaystyle f_3(x^1, x^2)}\right )=0$ on $I_1\times I_2$.

We consider now $M:I_1\rightarrow \mathbb R$, $M(x^1)=G_3(x^2)+\frac{\displaystyle f_2(x^1)}{\displaystyle f_3(x^1, x^2)}$, $x^1\in I_1$, for an arbitrary $x^2\in I_2$. The function $M$ is well defined, smooth, and we have  $M(x^1)-G_3(x^2)\neq 0$ and
\begin{equation}\label{f_3}
f_3(x^1, x^2)=\frac{\displaystyle f_2(x^1)}{\displaystyle M(x^1)-G_3(x^2)}
\end{equation}
for any $(x^1, x^2)\in I_1\times I_2$, and $G_3'(x^2)$ is zero on the sets $K_2^2$ and nonzero on the intervals $J_2^2$. We get $a_{32}(x^1, x^2)=\bar{G}_2 (x^2)f_3(x^1, x^2)$ on $I_1\times I_2$.

\bigskip
Depending on the zero or nonzero values of $a_{21}(x^1)$, $a_{31}(x^1, x^2)$, and $a_{32}(x^1, x^2)$, we have the following cases.

\vspace{5pt}
Case I: $a_{21}=0$ on $I_1$. In this case, $f_2=k_2\in \mathbb R\setminus \{0\}$ and the third equation of (\ref{T3.8}) becomes
$\frac{\displaystyle \partial}{\displaystyle \partial x^2}\left (\frac{\displaystyle 1}{\displaystyle f_3}\cdot \frac{\displaystyle \partial f_3}{\displaystyle \partial x^2}\right )=\left (\frac{\displaystyle 1}{\displaystyle f_3}\cdot\frac{\displaystyle \partial f_3}{\displaystyle \partial x^2}\right )^2$.
We get:

(i) $\frac{\displaystyle 1}{\displaystyle f_3}\cdot \frac{\displaystyle \partial f_3}{\displaystyle \partial x^2}=\frac{\displaystyle -1}{\displaystyle x^2-H(x^1)}$, which corresponds to
$f_3(x^1, x^2)=\frac{\displaystyle M_2(x^1)}{\displaystyle x^2-H(x^1)}$, on any $I_1\times J_2^2$ -type set, with $M_2(x^1)\neq 0$ and $H(x^1)\notin J_2^2$ for any $x^1\in I_1$. Combining with (\ref{f_3}) and differentiating with respect to $x^2$, we get $M_2(x^1)G_3'(x^2)=-k_2$; hence, $G_3'$ is a nonzero constant on any interval $J_2^2$, and $M_2$ is constant on $I_1$. Denoting $G_3'=:c_3\neq 0$ on  $J_2^2$, we get $M_2=\frac{\displaystyle -k_2}{\displaystyle c_3}$ on $I_1$, and $G_3(x^2)=c_3 x^2+d_3$ on $J_2^2$, $d_3\in \mathbb R$, from which, $M(x^1)=c_3H(x^1)+d_3$ on $I_1$. We obtain
$f_3(x^1, x^2)=\frac{\displaystyle k_2}{\displaystyle c_3(H(x^1)-x^2)}$ on $I_1\times J_2^2$.

(ii) $\frac{\displaystyle \partial f_3}{\displaystyle \partial x^2}=0$ on any $I_1\times K_2^2$ -type set, and $G_3'(x^2)=0$ on $K_2^2$.

Resuming, due to the continuity of $G_3'$, we have $J_2^2=I_2$ or $K_2^2=I_2$. Hence, $G_3'(x^2)=\nolinebreak c_3$ on $I_2$, with $c_3\in \mathbb R$. For $c_3\neq 0$, we have $f_3(x^1, x^2)=\frac{\displaystyle k_2}{\displaystyle c_3(H(x^1)-x^2)}$, $M(x^1)=c_3H(x^1)+d_3$, $d_3\in \mathbb R$, and $a_{32}(x^1, x^2)\neq 0$ everywhere on $I_1\times I_2$. For $c_3=0$, we have $f_3=f_3(x^1)$ and $a_{32}=0$ on $I_1\times I_2$.

\vspace{5pt}
Subcase I.1: There exists $(x_0^1, x_0^2)\in I_1\times I_2$ such that $a_{31}(x_0^1, x_0^2)\neq 0$. Then, $a_{31}(x^1, x^2)\neq 0$ everywhere on a maximal open interval $I_1\times J_2$. We get
$a_{31}(x^1, x^2)=\frac{\displaystyle -1}{\displaystyle x^1-F(x^2)}$ and
$f_3(x^1, x^2)=\frac{\displaystyle G(x^2)}{\displaystyle x^1-F(x^2)}$ on $I_1\times J_2$, with $F(x^2)\notin I_1$ and $G(x^2)\neq 0$ for any $x^2\in\nolinebreak J_2$. From (\ref{f_3}), we have
$f_3(x^1, x^2)=\frac{\displaystyle k_2}{\displaystyle M(x^1)-G_3(x^2)}$, so
$k_2(x^1-\nolinebreak F(x^2))=G(x^2)(M(x^1)-G_3(x^2))$, which implies $k_2=G(x^2)M'(x^1)$ for any $(x^1, x^2)\in I_1\times J_2$. Hence, $G$ is constant on $J_2$, and $M'$ is a nonzero constant on $I_1$. Denoting $G=k_3\in \mathbb R\setminus \{0\}$, we get $f_3(x^1, x^2)=\frac{\displaystyle k_3}{\displaystyle x^1-F(x^2)}$ for any $(x^1, x^2)\in I_1\times J_2$. We notice that with this formula, the second equation of (\ref{T3.8}) is verified. Supposing $J_2\neq I_2$, from the expression of $a_{31}(x^1, x^2)$, we deduce that $F$ is unbounded on $J_2$; hence, $f_3$ vanishes at a boundary point of $I_1\times J_2$, contradiction. So, $J_2=I_2$, and $a_{31}(x^1, x^2)\neq 0$ everywhere on $I_1\times I_2$.

Also, it follows that, if $a_{31}$ vanishes at a point of $I_1\times I_2$, then $a_{31}=0$ on $I_1\times I_2$.

\vspace{5pt}
Subsubcase I.1.1: $a_{32}(x^1, x^2)\neq 0$ everywhere on $I_1\times I_2$. We have $f_3(x^1, x^2)=\frac{\displaystyle k_2}{\displaystyle c_3(H(x^1)-x^2)}=\frac{\displaystyle k_3}{\displaystyle x^1-F(x^2)}$ on $I_1\times I_2$, from which, by differentiation with respect to $x^2$, $k_2 F'(x^2)=k_3 c_3$, so $F(x^2)=\frac {\displaystyle k_3 c_3}{\displaystyle k_2}x^2-c_4$ on $I_2$, with $c_4\in \mathbb R$. We get $f_3(x^1, x^2)=\frac {\displaystyle k_3}{\displaystyle x^1+c_5 x^2+ c_4}$ on $I_1\times I_2$, with $c_5\neq 0$ and $c_4\in \mathbb R$, formula that satisfies (\ref{T3.8}).

\vspace{5pt}
Subsubcase I.1.2: $a_{32}=0$ on $I_1\times I_2$. We have $f_3(x^1, x^2)=\frac{\displaystyle k_3}{\displaystyle x^1-F(x^2)}$ and $f_3=f_3(x^1)$ on $I_1\times I_2$, so $F$ is constant on $I_2$, $F=:c_6\in \mathbb R\setminus I_1$, and $f_3(x^1, x^2)=\frac {\displaystyle k_3}{\displaystyle x^1-c_6}$ on $I_1\times I_2$, formula that satisfies (\ref{T3.8}).

\vspace{5pt}
Subcase I.2: $a_{31}=0$ on $I_1\times I_2$. Hence, $f_3=f_3(x^2)$ on $I_1\times I_2$.

\vspace{5pt}
Subsubcase I.2.1: $a_{32}(x^1, x^2)\neq 0$ everywhere on $I_1\times I_2$. We have $f_3(x^1, x^2)=\frac{\displaystyle k_2}{\displaystyle c_3(H(x^1)-x^2)}$ and $f_3=f_3(x^2)$ on $I_1\times I_2$; hence, $H$ is constant on $I_1$, $H=:c_7$. Renoting the constants, we get $f_3(x^1, x^2)=\frac {\displaystyle k_3}{\displaystyle x^2-c_7}$ on $I_1\times I_2$, with $k_3\neq 0$, $c_7\in \mathbb R \setminus I_2$, formula that satisfies (\ref{T3.8}).

\vspace{5pt}
Subsubcase I.2.2: $a_{32}=0$ on $I_1\times I_2$. Hence, $f_3=f_3(x^1)$ and $f_3=f_3(x^2)$, so $f_3$ is constant on $I_1\times I_2$, which satisfies (\ref{T3.8}).

\vspace{5pt}
Case II: $a_{21}(x^1)\neq 0$ everywhere on $I_1$. In this case, $a_{21}(x^1)=\frac{\displaystyle -1}{\displaystyle x^1-c_2}$, and $f_2(x^1)=\frac{\displaystyle k_2}{\displaystyle x^1-c_2}$ for any $x^1\in I_1$, with $k_2\neq 0$, $c_2\notin I_1$.

\vspace{5pt}
Subcase II.1: $a_{31}(x^1, x^2)\neq 0$ everywhere on a maximal open interval $I_1\times J_2$. We have
$a_{31}(x^1, x^2)=\frac{\displaystyle -1}{\displaystyle x^1-F(x^2)}$ and
$f_3(x^1, x^2)=\frac{\displaystyle G(x^2)}{\displaystyle x^1-F(x^2)}$ on $I_1\times J_2$, with $F(x^2)\notin I_1$ and $G(x^2)\neq 0$ for any $x^2\in\nolinebreak J_2$. The fourth equation of (\ref{T3.8}) becomes
$\frac{\displaystyle \partial a_{32}}{\displaystyle \partial x^1}=\frac{\displaystyle -1}{\displaystyle x^1-F(x^2)} a_{32}$, hence
$a_{32}(x^1, x^2)=\frac{\displaystyle H(x^2)}{\displaystyle x^1-F(x^2)}$. Since $\frac{\displaystyle \partial f_3}{\displaystyle \partial x^2}(x^1, x^2)=
\frac{\displaystyle G'(x^2)(x^1-F(x^2))+G(x^2)F'(x^2)}{\displaystyle (x^1-F(x^2))^2}$, the above equality becomes
$$G'(x^2)+\frac{\displaystyle G'(x^2)(c_2-F(x^2))+G(x^2)F'(x^2)}{\displaystyle x^1-c_2}=\frac{1}{\displaystyle k_2} G(x^2)H(x^2),$$
from which, $G'(x^2)(c_2-F(x^2))+G(x^2)F'(x^2)=0$ for any $x^2\in J_2$.

\vspace{5pt}
Subsubcase II.1.1: There exists $x_0^2\in J_2$ such that $F(x_0^2)\neq c_2$. Then,
$\frac{\displaystyle G'(x^2)}{\displaystyle G(x^2)}=\frac{\displaystyle (c_2-F(x^2))'}{\displaystyle c_2-F(x^2)}$, so $G(x^2)=c_8 (c_2-F(x^2))$, and $F(x^2)\neq c_2$ for any $x^2\in J_2$, with $c_8\neq 0$. We get $f_3(x^1, x^2)=c_8 \frac{\displaystyle c_2-F(x^2)}{\displaystyle x^1-F(x^2)}$ for any $(x^1, x^2)\in\nolinebreak I_1\times\nolinebreak J_2$, and the third equation of (\ref{T3.8}) becomes
$$\frac{\displaystyle F''(x^2)}{\displaystyle c_2-F(x^2)}+2 \left(\frac{\displaystyle F'(x^2)}{\displaystyle c_2-F(x^2)}\right)^2=\frac{\displaystyle -1}{\displaystyle k_2^2}.$$
Denoting $L=\frac{\displaystyle (c_2-F(x^2))'}{\displaystyle c_2-F(x^2)}$, the above equation becomes $L'-L^2=\frac{\displaystyle 1}{\displaystyle k_2^2}$. We get
$L(x^2)=\nolinebreak \frac{\displaystyle 1}{\displaystyle k_2} \tan\left(\frac{\displaystyle x^2+c_9}{\displaystyle k_2}\right)$ on $J_2$, with $c_9\in \mathbb R$ such that
$\frac{\displaystyle x^2+c_9}{\displaystyle k_2}\in\nolinebreak \left(-\frac{\displaystyle \pi}{\displaystyle 2}, \frac{\displaystyle \pi}{\displaystyle 2}\right)$ for any $x^2\in J_2$. We obtain
$F(x^2)=c_2+\frac{\displaystyle c_{10}}{\displaystyle \cos \left(\frac{\displaystyle x^2+c_9}{\displaystyle k_2}\right)}$, with $c_{10}\neq 0$, so
$$f_3(x^1, x^2)=\frac{\displaystyle G(x^2)}{\displaystyle x^1-F(x^2)}=
\frac{\displaystyle -c_8 c_{10}}{\displaystyle (x^1-c_2)\cos \left(\frac{\displaystyle x^2+c_9}{\displaystyle k_2}\right)-c_{10}}$$
for any $(x^1, x^2)\in I_1\times J_2.$
It follows that
$$a_{31}(x^1, x^2)=\frac{\displaystyle -\cos \left(\frac{\displaystyle x^2+c_9}{\displaystyle k_2}\right)}{\displaystyle (x^1-c_2)\cos \left(\frac{\displaystyle x^2+c_9}{\displaystyle k_2}\right)-c_{10}}$$
for any $(x^1, x^2)\in I_1\times J_2$.\\
If $x_0^2\in I_2$ is a boundary point of $J_2$, then
$$\lim_{x^2\rightarrow x_0^2,\ x^2\in J_2}a_{31}(x^1, x^2)=a_{31}(x^1, x_0^2)=0$$
for any $x^1\in I_1$, so
$\cos \left(\frac{\displaystyle x_0^2+c_9}{\displaystyle k_2}\right)=0$, and
$\frac{\displaystyle x_0^2+c_9}{\displaystyle k_2}\in \left\{-\frac{\displaystyle \pi}{\displaystyle 2}, \frac{\displaystyle \pi}{\displaystyle 2}\right\}$. It follows that
$\frac{\displaystyle \partial f_3}{\displaystyle \partial x^2}(x^1, x_0^2)= \mp\frac{\displaystyle c_8}{\displaystyle k_2 c_{10}}(x^1-c_2)\neq 0$ for any  $x^1\in I_1$, and $\frac{\displaystyle \partial^2 f_3}{\displaystyle \partial x^1\partial x^2}(x^1, x_0^2)= \mp\frac{\displaystyle c_8}{\displaystyle k_2 c_{10}}\neq 0$.

\vspace{5pt}
The interval $K_2$ which contains $x_0^2$ is trivial, that is, $K_2=\{x_0^2\}$ (otherwise, from $\frac{\displaystyle \partial f_3}{\displaystyle \partial x^1}(x^1, x^2)=0$ on $I_1\times K_2$, it follows that $\frac{\displaystyle \partial^2 f_3}{\displaystyle \partial x^2\partial x^1}(x^1, x^2)=0$ on $I_1\times K_2$, contradiction).

\vspace{5pt}
We also notice that the intervals of $J_2$ -type extend as much as the limits of $I_2$ permit till a length of $k_2\pi$. So, the $K_2$ -type intervals are all trivial, and they are borders, on both sides, of $J_2$ -type intervals.

Comparing the expressions of $f_3(x^1, x^2)$ at the left and at the right side of a trivial interval $K_2$, we notice that these can be written with the same values of the constants $c_2$, $c_8$, $c_9$, $c_{10}$, i.e., we have the same formula for $f_3(x^1, x^2)$ at both sides of $K_2$ and also in $K_2$, without the restriction to $\left(-\frac{\displaystyle \pi}{\displaystyle 2}, \frac{\displaystyle \pi}{\displaystyle 2}\right)$ of the argument of the cosine function, but with the condition that the denominator should be nonzero. We conclude that
$$f_3(x^1, x^2)=
\frac{\displaystyle c_8}{\displaystyle \frac{1}{-c_{10}}(x^1-c_2)\cos \left(\frac{\displaystyle x^2+c_9}{\displaystyle k_2}\right)+1} \ \textrm{ for any } (x^1, x^2)\in I_1\times I_2,$$
with $c_8, c_{10}\neq 0$, $c_2\in \mathbb R\setminus I_1$, $c_9\in \mathbb R$ such that
$\displaystyle \frac{1}{-c_{10}}(x^1-c_2)\cos \left(\frac{\displaystyle x^2+c_9}{\displaystyle k_2}\right)+1\neq 0$ everywhere on $I_1\times I_2$. This formula of $f_3$ satisfies (\ref{T3.8}).

\vspace{5pt}
Subsubcase II.1.2: $F(x^2)=c_2$ for any $x^2\in J_2$. Then,
$f_3(x^1, x^2)=\frac{\displaystyle G(x^2)}{\displaystyle x^1-c_2}$ on $I_1\times J_2$, with $G(x^2)\neq 0$ for any $x^2\in\nolinebreak J_2$. The third equation of (\ref{T3.8}) becomes
$$k_2^2\left[\left(\frac{\displaystyle G'(x^2)}{\displaystyle G(x^2)}\right)'-\left(\frac{\displaystyle G'(x^2)}{\displaystyle G(x^2)}\right)^2\right]=1,$$
from which,
$$\frac{\displaystyle G'(x^2)}{\displaystyle G(x^2)}=\frac{\displaystyle 1}{\displaystyle k_2}\tan\left(\frac{\displaystyle x^2+c_{11}}{\displaystyle k_2}\right),$$
with $c_{11}\in \mathbb R$ such that
$\frac{\displaystyle x^2+c_{11}}{\displaystyle k_2}\in\nolinebreak \left(-\frac{\displaystyle \pi}{\displaystyle 2}, \frac{\displaystyle \pi}{\displaystyle 2}\right)$ for any $x^2\in J_2$. We obtain
$G(x^2)=\frac{\displaystyle c_{12}}{\displaystyle \cos \left(\frac{\displaystyle x^2+c_{11}}{\displaystyle k_2}\right)}$, with $c_{12}\neq 0$, so
$$f_3(x^1, x^2)=\frac{\displaystyle G(x^2)}{\displaystyle x^1-F(x^2)}=
\frac{\displaystyle c_{12}}{\displaystyle (x^1-c_2)\cos \left(\frac{\displaystyle x^2+c_{11}}{\displaystyle k_2}\right)},$$
and $a_{31}(x^1, x^2)=\frac{\displaystyle -1}{\displaystyle x^1-c_2}$ for any $(x^1, x^2)\in I_1\times J_2$. Since $J_2$ is maximal with $a_{31}\neq 0$, we obtain $J_2=I_2$. Hence, subsubcase II.1.2 is valid on $I_1\times I_2$. This formula of $f_3$ satisfies (\ref{T3.8}).

\vspace{5pt}
Subcase II.2: $a_{31}=0$ on $I_1\times I_2$, that is, $\frac{\displaystyle\partial f_3}{\displaystyle\partial x^1}=0$ on $I_1\times I_2$. We get
$$\frac{\displaystyle\partial^2 f_3}{\displaystyle\partial x^1\partial x^2}= \frac{\displaystyle\partial^2 f_3}{\displaystyle\partial x^2\partial x^1}=0,$$
and
$$\frac{\displaystyle\partial a_{32}}{\displaystyle\partial x^1}(x^1, x^2)= \frac{\displaystyle k_2}{\displaystyle x^1-c_2} \cdot\frac{\frac{\displaystyle\partial^2 f_3}{\displaystyle\partial x^1\partial x^2}}{\displaystyle f_3}- \frac{\displaystyle k_2}{\displaystyle (x^1-c_2)^2}\cdot\frac{\frac{\displaystyle\partial f_3}{\displaystyle\partial x^2}}{\displaystyle f_3}=
-\frac{\displaystyle k_2}{\displaystyle (x^1-c_2)^2} \cdot\frac{\frac{\displaystyle\partial f_3}{\displaystyle\partial x^2}}{\displaystyle f_3}$$
on $I_1\times I_2$. The fourth equation in (\ref{T3.8}) becomes
$\frac{\displaystyle\partial f_3}{\displaystyle\partial x^2}=0$. It follows that $f_3$ is constant on $I_1\times I_2$, which satisfies (\ref{T3.8}).
\end{proof}

\begin{example}
The sequential warped product manifold $$\left(I_1\times_{f}I_2\right)\times_{h}\mathbb R=\Big(I_1\times I_2\times \mathbb R,\ g=(dx^1)^2+f^2(dx^2)^2+h^2(dx^3)^2\Big),$$ for
$$f(x^1)=x^1+m, \ \ h(x^1,x^2)=(x^1+m)\cos (x^2+n),$$
with $m, n\in \mathbb R$ such that $x^1+m>0$, and $\left|x^2+n\right|<\displaystyle\frac{\pi}{2}$ for any $x^1\in I_1$, $x^2\in I_2$, is a flat Riemannian manifold.
\end{example}

And we can further deduce
\begin{corollary}\label{corT3.8b}
There do not exist proper flat sequential warped product manifolds of the form
$$\left(\mathbb R\times_{f}\mathbb R\right)\times_{h}\mathbb R=\Big(\mathbb R^3,\ g=(dx^1)^2+f^2(dx^2)^2+h^2(dx^3)^2\Big).$$
\end{corollary}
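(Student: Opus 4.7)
The plan is to invoke Theorem \ref{ps11mv} with $I_1=I_2=I_3=\mathbb R$, identifying the metric $g=(dx^1)^2+f^2(dx^2)^2+h^2(dx^3)^2$ with the form $\frac{1}{f_1^2}(dx^1)^2+\frac{1}{f_2^2}(dx^2)^2+\frac{1}{f_3^2}(dx^3)^2$ under the substitution $f_1=1$, $f_2=1/f$, $f_3=1/h$. Properness amounts to both $f$ and $h$ being nonconstant, i.e.\ both $f_2$ and $f_3$ being nonconstant, so the task reduces to showing that every case of Theorem \ref{ps11mv} in which at least one of $f_2$, $f_3$ is nonconstant is incompatible with $I_1=I_2=\mathbb R$.

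First I would dispatch cases (2), (3), (5), (6), and (7) of Theorem \ref{ps11mv} simultaneously: each of these involves at least one parameter $c_j$ required to lie in $\mathbb R\setminus I_k$ for some $k\in\{1,2\}$, and when $I_k=\mathbb R$ this set is empty. Hence none of these five cases can occur on $\mathbb R^3$.

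The only remaining possibility with a nonconstant warping function is case (4), in which $f_2$ is a nonzero constant and
$$f_3(x^1,x^2)=\frac{k_3}{c_1x^1+x^2+c_2}$$
with $c_1\in\mathbb R\setminus\{0\}$, subject to $c_1x^1+x^2+c_2\neq 0$ for every $(x^1,x^2)\in I_1\times I_2$. Since $c_1\neq 0$, the affine map $(x^1,x^2)\mapsto c_1x^1+x^2+c_2$ is surjective onto $\mathbb R$ and therefore vanishes somewhere on $\mathbb R^2$, contradicting the nonvanishing requirement. Thus case (4) is also ruled out.

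Only case (1) survives, and it forces both $f_2$ and $f_3$ (hence $f$ and $h$) to be nonzero constants, so the sequential warped product degenerates to a direct product and fails to be proper. The plan involves no genuine analytic obstacle; the only subtlety is the careful bookkeeping of the seven cases of Theorem \ref{ps11mv} and the surjectivity observation ruling out case (4).
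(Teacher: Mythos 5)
Your proposal is correct and follows exactly the route the paper intends: the corollary is deduced from Theorem \ref{ps11mv} by specializing $I_1=I_2=I_3=\mathbb R$ (with $f_2=1/f$, $f_3=1/h$), observing that cases (2), (3), (5), (6), (7) each require some constant in an empty set $\mathbb R\setminus I_k$, and that case (4) fails because the affine function $c_1x^1+x^2+c_2$ with $c_1\neq 0$ necessarily vanishes on $\mathbb R^2$, leaving only the non-proper constant case (1). Your surjectivity remark for case (4) is precisely the small extra observation the paper's ``we can further deduce'' leaves implicit.
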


\begin{theorem}\label{ps11ma}
If $f_1=f_2=f(x^3)$, $f_3=f_3(x^1,x^2)$, then the doubly warped product manifold
$$_{\frac{1}{f}}(I_1\times I_2)\times_{\frac{1}{f_3}}I_3=(I,g)$$
is a flat Riemannian manifold if and only if
$$f=k\in \mathbb R\setminus \{0\}\ \textrm{ and }f_3(x^1,x^2)=\frac{\displaystyle 1}{\displaystyle c_1x^1+c_2x^2+c_3},$$
with $c_1, c_2, c_3\in \mathbb R$ such that $c_1x^1+c_2x^2+c_3\neq 0$ for any $(x^1,x^2)\in I_1\times I_2$.

If $c_1=c_2=0$, then the manifold is just a direct product, and the manifold reduces to a warped product manifold in the rest.
\end{theorem}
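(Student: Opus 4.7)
The plan is to compute the coefficients $a_{ij}$ from the hypotheses, substitute into the Ricci formulas of Section~2, and argue by cases. From $f_1=f_2=f(x^3)$ and $f_3=f_3(x^1,x^2)$ one immediately gets $a_{12}=a_{21}=0$, $a_{13}=a_{23}=\frac{f_3 f'}{f}$, $a_{31}=\frac{f}{f_3}\,\frac{\partial f_3}{\partial x^1}$ and $a_{32}=\frac{f}{f_3}\,\frac{\partial f_3}{\partial x^2}$. The two mixed Ricci components involving $E_3$ collapse to $\R(E_1,E_3)=E_1(a_{23})=f'\,\partial_{x^1}f_3$ and $\R(E_2,E_3)=E_2(a_{13})=f'\,\partial_{x^2}f_3$, so $\R=0$ already forces $f'\,\partial_{x^1}f_3=f'\,\partial_{x^2}f_3=0$ on $I$.

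Next, I would rule out any non-constant $f$. If $f'(x_0^3)\neq 0$ at some $x_0^3\in I_3$, then by continuity $f'\neq 0$ on an open neighborhood, and the identity above gives $\partial_{x^1}f_3=\partial_{x^2}f_3=0$ there; since $f_3$ is independent of $x^3$, those partials vanish on all of $I_1\times I_2$, so $f_3\equiv c\in\mathbb R\setminus\{0\}$. Substituting $f_3=c$, $a_{31}=a_{32}=0$ and $a_{13}=a_{23}=ch$ (where $h:=f'/f$) into the diagonal Ricci entries yields
\begin{align*}
\R(E_1,E_1)=\R(E_2,E_2)&=c^2(h'-2h^2),\\
\R(E_3,E_3)&=2c^2(h'-h^2),
\end{align*}
so $\R=0$ simultaneously demands $h'=2h^2$ and $h'=h^2$. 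Subtracting gives $h\equiv 0$ on $I_3$, contradicting $f'(x_0^3)\neq 0$. Therefore $f=k\in\mathbb R\setminus\{0\}$ throughout $I$.

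With $f=k$ constant we get $a_{13}=a_{23}=0$, and the surviving coefficients $a_{31}=k\,\partial_{x^1}f_3/f_3$, $a_{32}=k\,\partial_{x^2}f_3/f_3$ differ from those appearing in Theorem~\ref{ps11mac7} only by the global scalar $k$, which cancels identically out of every term of $\R$. Hence $\R=0$ reduces exactly to the PDE system (\ref{T3.3}), and I would invoke Theorem~\ref{ps11mac7} verbatim to conclude that $f_3(x^1,x^2)=1/(c_1x^1+c_2x^2+c_3)$ with $c_1x^1+c_2x^2+c_3\neq 0$ on $I_1\times I_2$; the converse is a direct verification, and the direct/warped product remarks at the end of the statement follow by inspection of the resulting metric. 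The main obstacle is the first step: one must ensure that a nonvanishing $f'$ on any subinterval genuinely propagates to force $f_3$ to be globally constant, so that the two algebraically incompatible ODEs $h'=2h^2$ and $h'=h^2$ can jointly deliver the contradiction; after that, the reduction to the previously settled warped product case of Theorem~\ref{ps11mac7} is automatic.
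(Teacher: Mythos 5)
Your proposal is correct and follows essentially the same route as the paper: use the vanishing of $\R(E_1,E_3)=E_1(a_{23})$ and $\R(E_2,E_3)=E_2(a_{13})$ to force $f'\,\partial_{x^i}f_3=0$, conclude $f=k$ constant, and then observe that the remaining Ricci equations reduce (the factor $k^2$ cancelling) to the system (\ref{T3.3}) already solved in Theorem \ref{ps11mac7}. In fact you make explicit a step the paper leaves implicit --- the contradiction between $h'=2h^2$ and $h'=h^2$ when $f_3$ is constant and $f'\neq 0$ --- which is exactly the justification needed for the paper's bare assertion that $\R=0$ entails $f'=0$.
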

\begin{proof}
We have: $a_{13}=f_3\frac{\displaystyle f'}{\displaystyle f}=a_{23}$, $a_{31}=\frac{\displaystyle f}{\displaystyle f_3}\cdot\frac{\displaystyle \partial f_3}{\displaystyle \partial x^1}$, $a_{32}=\frac{\displaystyle f}{\displaystyle f_3}\cdot\frac{\displaystyle \partial f_3}{\displaystyle \partial x^2}$, and $a_{12}=a_{21}=0$, and we get:
\begin{align*}
\R(E_1,E_1)&=E_1(a_{31})+E_3(a_{13})-a_{31}^2-a_{13}^2-a_{13}a_{23},\\
\R(E_2,E_2)&=E_2(a_{32})+E_3(a_{23})-a_{32}^2-a_{23}^2-a_{13}a_{23},\\
\R(E_3,E_3)&=E_1(a_{31})+E_2(a_{32})+E_3(a_{13})+E_3(a_{23})-a_{31}^2-a_{13}^2-a_{32}^2-a_{23}^2,\\
\R(E_1,E_2)&=E_1(a_{32})-a_{31}a_{32},\\
\R(E_1,E_3)&=E_1(a_{23}),\\
\R(E_2,E_3)&=E_2(a_{13}).
\end{align*}

Then, $\R=0$ if and only if
\begin{equation*}\label{T3.11}
\left\{
\begin{aligned}
&f'=0\\
& \frac{\displaystyle \partial }{\displaystyle \partial x^1}\left(\frac{\displaystyle 1}{\displaystyle f_3}\cdot\frac{\displaystyle \partial f_3}{\displaystyle \partial x^1}\right)=\left(\frac{\displaystyle 1}{\displaystyle  f_3}\cdot\frac{\displaystyle \partial f_3}{\displaystyle \partial x^1}\right)^2\\
& \frac{\displaystyle \partial }{\displaystyle \partial x^2}\left(\frac{\displaystyle 1}{\displaystyle f_3}\cdot\frac{\displaystyle \partial f_3}{\displaystyle \partial x^2}\right)=\left(\frac{\displaystyle 1}{\displaystyle  f_3}\cdot\frac{\displaystyle \partial f_3}{\displaystyle \partial x^2}\right)^2\\
& \displaystyle \frac{\partial }{\partial x^1}\left(\displaystyle \frac{1}{f_3}\cdot\frac{\partial f_3}{\partial x^2}\right)=\left(\displaystyle \frac{1}{f_3}\cdot\frac{\partial f_3}{\partial x^1}\right)\left(\displaystyle \frac{1}{f_3}\cdot\frac{\partial f_3}{\partial x^2}\right)
\end{aligned}
\right. ,
\end{equation*}
which implies that $f$ is constant. We get the expression of $f_3$ with the same proof as for Theorem \ref{ps11mac7}.
\end{proof}

And we can further deduce
\begin{corollary}\label{corT3.8ba}
There do not exist proper flat doubly warped product manifolds of the form
$$_{f}\mathbb R^2\times_{h}\mathbb R=\Big(\mathbb R^3,\ g=f^2[(dx^1)^2+(dx^2)^2]+h^2(dx^3)^2\Big).$$
\end{corollary}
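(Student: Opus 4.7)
The plan is to derive the corollary directly from Theorem~\ref{ps11ma} by specializing to the full space $I_1=I_2=I_3=\mathbb R$. First I would match the metric $g=f^2[(dx^1)^2+(dx^2)^2]+h^2(dx^3)^2$ against the diagonal form (\ref{f6}), yielding the identifications $f_1=f_2=1/f$ (depending only on $x^3$) and $f_3=1/h$ (depending only on $(x^1,x^2)$). This places the situation exactly within the hypotheses of Theorem~\ref{ps11ma}, with the roles of ``$f$'' and ``$f_3$'' in that theorem played respectively by $1/f$ and $1/h$ of the current statement.

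Next I would invoke the conclusion of Theorem~\ref{ps11ma}: flatness forces $1/f$ to be a nonzero constant, so $f$ itself is constant, and
$$\frac{1}{h(x^1,x^2)}=\frac{1}{c_1 x^1+c_2 x^2+c_3}$$
for constants $c_1,c_2,c_3\in\mathbb R$ subject to $c_1 x^1+c_2 x^2+c_3\neq 0$ for every $(x^1,x^2)\in\mathbb R^2$. In particular $h(x^1,x^2)=c_1 x^1+c_2 x^2+c_3$ must be an affine function that is nowhere zero on the whole plane.

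The observation that closes the argument is that a real affine function on all of $\mathbb R^2$ can avoid the value zero only when it is constant: if either $c_1$ or $c_2$ were nonzero, the linear form $(x^1,x^2)\mapsto c_1 x^1+c_2 x^2$ would be surjective onto $\mathbb R$, so the level set $c_1 x^1+c_2 x^2+c_3=0$ would be nonempty, contradicting the nonvanishing requirement. Hence $c_1=c_2=0$ and $h\equiv c_3\in\mathbb R\setminus\{0\}$, making $h$ constant as well. With both warping factors constant, the manifold degenerates to a direct product, which is forbidden by properness; therefore no proper flat doubly warped product of the prescribed form exists on $\mathbb R^3$.

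I foresee no genuine obstacle: the argument is a straightforward specialization of Theorem~\ref{ps11ma} to a global domain combined with the elementary nonvanishing obstruction for affine functions on $\mathbb R^2$, entirely parallel to the earlier nonexistence results such as Corollary~\ref{corT3.8x} (indeed, the planar-warping part of the conclusion is identical).
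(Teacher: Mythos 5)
Your proposal is correct and follows exactly the route the paper intends: the corollary is stated as an immediate consequence of Theorem~\ref{ps11ma}, and your argument---identifying $f_1=f_2=1/f$, $f_3=1/h$, invoking the theorem to get $f$ constant and $h$ affine and nowhere zero, then noting that a nonconstant affine function on all of $\mathbb R^2$ must vanish somewhere, so $c_1=c_2=0$ and properness fails---is precisely the implicit deduction. No gaps; the specialization to the global domain $\mathbb R^3$ and the handling of properness (all warping functions forced constant) are both handled correctly.
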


\section*{Declarations}
{\bf Competing interests:} The authors have no competing interests to declare that are relevant to the content of this article.

{\bf Funding:} No funding was received for conducting this study.


\end{document}